\newtheorem{theorem}{Theorem}[section]
\newtheorem{proposition}[theorem]{Proposition}
\newtheorem{lemma}[theorem]{Lemma}
\newtheorem{corollary}[theorem]{Corollary}
\theoremstyle{definition}
\newtheorem{problem}[theorem]{Problem}
\title{Maximal $2$-distance sets containing the regular simplex}  
\author{
Hiroshi Nozaki and Masashi Shinohara
}
\begin{document}
\maketitle

\renewcommand{\thefootnote}{\fnsymbol{footnote}}
\footnote[0]{2010 Mathematics Subject Classification: 
 05D05 (05B05)
\\ \noindent
{\it Hiroshi Nozaki}: 
	Department of Mathematics Education, 
	Aichi University of Education, 
	1 Hirosawa, Igaya-cho, 
	Kariya, Aichi 448-8542, 
	Japan.
	hnozaki@auecc.aichi-edu.ac.jp.

\noindent
{\it Masashi Shinohara}: 
	Department of Education, 
	Faculty of Education, 
	Shiga University,
	2-5-1 Hiratsu, Otsu, Shiga 520-0862, 
	Japan.
	shino@edu.shiga-u.ac.jp
}

\begin{abstract}
A finite subset $X$ of the Euclidean space is called an $m$-distance set if the number of distances between two distinct points in $X$ is equal to $m$. 
An $m$-distance set $X$ is said to be maximal if 
any vector cannot be added to $X$ while maintaining the $m$-distance condition. We investigate a necessary and sufficient condition for vectors to be added to a regular simplex such that the set has only $2$ distances. 
We construct several $d$-dimensional maximal $2$-distance sets that contain a $d$-dimensional regular simplex. 
In particular, there exist infinitely many maximal non-spherical $2$-distance sets that contain both the regular simplex and the representation of a strongly resolvable design.  
The maximal $2$-distance set has size $2s^2(s+1)$, and the dimension is $d=(s-1)(s+1)^2-1$, where $s$ is a prime power.   
\end{abstract}
\textbf{Key words}: 
Maximal distance set, quasi-symmetric design. 
\section{Introduction}
A finite subset $X$ of the Euclidean space $\mathbb{R}^d$ is called an {\it $m$-distance set} if $|A(X)|=m$, where 
\[A(X)=\{\mathfrak{d}(\mathbf{x},\mathbf{y})\colon\, \mathbf{x},\mathbf{y} \in X, \mathbf{x} \ne \mathbf{y} \}
\]
and $\mathfrak{d}(\mathbf{x},\mathbf{y})$ is the Euclidean distance of $\mathbf{x}$ and $\mathbf{y}$. 
The size of an $m$-distance set in $\mathbb{R}^d$ is bounded above by $\binom{d+m}{m}$ \cite{BBS,B84}. 
The major problem of $m$-distance sets is to determine the largest possible $m$-distance set for given dimension $d$. 
Let $N=\binom{d+m-1}{m-1}+\binom{d+m-2}{m-2}$.
If the size of an $m$-distance set in $\mathbb{R}^d$ is at least $2N$, then 
\[
s_j=\prod_{i=1,\ldots,m,i\ne j}\frac{\alpha_j^2}{\alpha_j^2-\alpha_i^2}
\]
is an integer, where $A(X)=\{\alpha_1,\ldots, \alpha_m\}$ \cite{N11}. 
This result for $m=2$ is proved by Larman--Rogers--Seidel \cite{LRS77}.
The value $s_j$ is called the {\it LRS ratio}. 
The absolute value of the LRS ratio is not large for given dimension $d$, namely $|s_j| \leq 1/2+\sqrt{N^2/(2N-2)+1/4}$ \cite{N11}. Moreover, for given integers $s_j$ ($1 \leq j \leq m$),  we can uniquely determine the distances $\alpha_i$ ($1\leq i \leq m$) up to scale \cite{N11}. 
In particular, for $m=2$, if we fix the integer $s_2=\alpha_2^2/(\alpha_2^2-\alpha_1^2)$, then we can determine $\alpha_2$ for $\alpha_1=1$. 
The LRS ratio is one of useful parameters to characterize large $m$-distance sets. 
 There are only finitely many $m$-distance sets whose size is at least $2N$ \cite{N11}. 
Largest $m$-distance sets are known for 
$(m,d)=(1,\text{any}), (\leq 6,2), (2,\leq 8),(3,3),(4,3),(3,4)$ \cite{ES66,EF96,L97,S04,S08,Spre,SO20,W12}.

There are large $m$-distance sets on the sphere $S^{d-1}$ that are obtained from representations of association schemes \cite{BB05, BIb,DGS77}. 
In particular, the representations of the Johnson schemes of class 2 on $d+1$ points are largest $2$-distance sets in
$S^{d-1}$ of size $d(d+1)/2$ except for $d=(2t+1)^2-3$ and $t \in \mathbb{N}$ \cite{GW18}. 
A systematic construction of non-spherical $m$-distance sets is not known in the literature. 
One of ad hoc constructions is to add vectors to a non-maximal spherical $m$-distance set while maintaining the $m$-distance condition.  Here an $m$-distance set $X$ in $\mathbb{R}^d$ is ${\it maximal}$ if there does not exist $\mathbf{x} \in \mathbb{R}^d \setminus X$ such that $X \cup \{\mathbf{x}\}$ is still $m$-distance.  
Maximal $m$-distance sets for $m=2,3,4$ containing the representations of the Johnson schemes of class $m$ and the Hamming schemes of class $m$ are investigated in \cite{BSS12} and \cite{AHNY17}, respectively. 
In this paper, we consider $2$-distance sets in $\mathbb{R}^d$ that contain a $d$-dimensional regular simplex. A largest $2$-distance set in $\mathbb{R}^8$ with 45 points \cite{L97} attains the bound $|X|\leq \binom{d+m}{m}$, and it contains a $8$-dimensional regular simplex.   

Let $R_d$ be a $d$-dimensional regular simplex with $d+1$ points. 
There are many choices of $\mathbf{x} \in \mathbb{R}^d$ such that $R_d \cup \{\mathbf{x } \}$ has only 2 distances.  
We suppose that the LRS ratio is an integer in order to find large 2-distance sets. 
First we prove that if a given LRS ratio $s$ is an integer, then    
there exist only finitely many dimensions $d$
where there exists a $2$-distance set containing $R_d$.   
For $s=2$, the possible dimensions are $d=7,8$.  We classify largest $2$-distance sets containing $R_d$ for $d=7,8$. For $s= 3$, the possible dimensions are $d=23,24,26,31,48$. 
In this case, we construct maximal $2$-distance sets containing $R_d$ by adding the representation of a quasi-symmetric design, see Table~\ref{tb:1}.  
In particular, there exist infinitely many dimensions $d$ where there exists a maximal $2$-distance set  containing both $R_d$ and the representation of a strongly resolvable design. 
\begin{center}
\begin{table} \label{tb:1}
\caption{Maximal 2-distance set that contains $R_d$}
\begin{tabular}{ccccccc}\label{tab:1}
$d$&size&  LRS ratio     & added set   \\ \hline 
7&29 &2& $J(8,3)$ (Largest 2-distance set in $\mathbb{R}^7$) \\
8&24 &2& Hadamard matrix of order $8$\\
8&30&2&   Largest 2-intersecting family\\
8&45&2& $J(9,2)$ (Largest 2-distance set in $\mathbb{R}^8$) \\
23 &144 &   3 &   2-$(21,7,12)$ design   \\ 
24 &278& 3  & 4-$(23,7,1)$ design  \\
26 & 280&3 & 4-$(23,7,1)$ design  \\
26 &280&3  &The complement of 4-$(23,7,1)$ design  \\
31 &110&3  &  3-$(22,6,1)$  design  \\
31 &286&3   &  The complement of 4-$(23,7,1)$ design  \\ 
48 &302&3   & The complement of 4-$(23,7,1)$ design  \\  
$s^3+s^2-s-2$ & $2s^2(s+1)$ & $s$ &  2-$(s^3,s^2,s+1)$ design
\end{tabular}
\end{table}  
\end{center}

\section{Vectors that can be added to the regular simplex $R_d$} 
In this section, 
we determine the form of a vector in $\mathbb{R}^d$ that can be added to a fixed $d$-dimensional regular simplex (the coordinates are fixed later) so that the number of distances are 2. 
Let $\{\mathbf{e}_1,\ldots, \mathbf{e}_{d+1}\}$ be the standard basis of the Euclidean space $\mathbb{R}^{d+1}$. 
Let $H_d$ denote the affine space  $\{(x_1,\ldots,x_{d+1})\in \mathbb{R}^{d+1} \mid \sum_{i=1}^{d+1} x_i =1\}$. Note that $H_d$ is isometric to $\mathbb{R}^d$. 
Let $R_d$ denote the set $\{\mathbf{e}_1,\ldots, \mathbf{e}_{d+1}\}$, which is interpreted as a $d$-dimensional regular simplex in $H_d$.  The set $R_d$ is a 1-distance set with distance $\sqrt{2}$. We would like to consider a maximal $2$-distance set in $H_d$ that contains $R_d$. 

Suppose there exists $\mathbf{x}=(x_1,\ldots, x_{d+1}) \in H_d$ such that $R_d \cup \{ \mathbf{x} \}$ has  only 2 distances $\sqrt{2}$, $\sqrt{\alpha}$. 
For distinct $i,j \in \{1,\ldots, d+1\}$, it follows that  
\[
\mathfrak{d}(\mathbf{x},\mathbf{e}_i)^2-\mathfrak{d}(\mathbf{x},\mathbf{e}_j)^2=2(x_i-x_j) \in \{0, \pm (\alpha-2)\}, 
\]
and $|x_i-x_j|\in \{0,|(\alpha-2)/2|\}$.  
Let $\beta=(\alpha-2)/2$, and note that $\beta \ne -1,0$ from $\alpha\ne 0,2$. 
 The vector $\mathbf{x}$ must be an element of  the set
\[
T_d(k,\beta)=\{\mathbf{x} \in H_d \colon\, \forall i, x_i \in \{c,c+\beta \}, |N(\mathbf{x},c)|=k  \},   
\]
for $k \in \{1, \ldots, d+1\}$, where  $N(\mathbf{x},a)=\{i \colon\, x_i=a\}$, and 
\[
c=\frac{1}{d+1}-\frac{d+1-k}{d+1} \beta. 
\]
For $i\in N(\mathbf{x},c)$, $j \in N(\mathbf{x},c+\beta)$, we have
\[
\mathfrak{d}(\mathbf{e}_i,\mathbf{x})^2-\mathfrak{d}(\mathbf{e}_j, \mathbf{x})^2=2 \beta=\alpha-2.
\]
This implies $\mathfrak{d}(\mathbf{e}_i,\mathbf{x})^2=\alpha$ and $\mathfrak{d}(\mathbf{e}_j, \mathbf{x})^2=2$. It follows from $\mathfrak{d}(\mathbf{e}_j,\mathbf{x})^2=2$ that
\begin{equation*}
\beta=
\begin{cases}
\frac{k\pm \sqrt{k(d+1)(d+2-k)}}{k(d+1-k)}\ne 0,
\qquad \text{if $1\leq k \leq d$},\\
   -\frac{d+2}{2(d+1)}, \qquad \text{if $k=d+1$}.
\end{cases}
 \end{equation*}
From  $1\leq k \leq d+1$, we have 
\begin{equation*} 
\frac{k- \sqrt{k(d+1)(d+2-k)}}{k(d+1-k)}
=-\frac{d+2}{k+\sqrt{k(d+1)(d+2-k)}}
\geq -1,
\end{equation*}
and the equality holds only if $k=1$.  By $\beta \ne -1$, it follows that $\beta=1+2/d$ for $k=1$. 
Therefore we have the following. 
\begin{proposition} \label{prop:1}
Let $d$, $k$ be integers with $d\geq 2$, $1\leq k \leq d+1$.  
Let $\beta$ be defined to be  
\begin{equation} \label{eq:beta}
\beta=\begin{cases}
\frac{k\pm \sqrt{k(d+1)(d+2-k)}}{k(d+1-k)}, \qquad \text{if $2\leq k \leq d$}, \\
1+\frac{2}{d}, \qquad \text{if $k=1$}, \\
 -\frac{d+2}{2(d+1)}, \qquad \text{if $k=d+1$}. 
\end{cases}
\end{equation}
Then, $\mathbf{x}$ is an element of  $T_d(k,\beta)$
for some $k$ and $\beta$ satisfying the conditions above if and only if $R_d \cup \{\mathbf{x} \}$ is a $2$-distance set in $H_d$.  
\end{proposition}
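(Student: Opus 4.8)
The plan is to prove the two implications separately, leaning on the computation already carried out above the statement.

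For the direction ``$R_d\cup\{\mathbf{x}\}$ is $2$-distance $\Rightarrow$ $\mathbf{x}\in T_d(k,\beta)$'', I would argue as in the preceding paragraphs: since $R_d$ is itself a $1$-distance set realizing only $\sqrt{2}$, a second distance $\sqrt{\alpha}$ (with $\alpha\neq 2$) must be realized by $\mathbf{x}$, and the identity $\mathfrak{d}(\mathbf{x},\mathbf{e}_i)^2-\mathfrak{d}(\mathbf{x},\mathbf{e}_j)^2=2(x_i-x_j)$ forces $|x_i-x_j|\in\{0,|\beta|\}$ with $\beta=(\alpha-2)/2$. Hence every coordinate of $\mathbf{x}$ lies in a two-element set $\{c,c+\beta\}$; setting $k=|N(\mathbf{x},c)|$ and using $\sum_i x_i=1$ pins down $c$ and places $\mathbf{x}\in T_d(k,\beta)$. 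The remaining condition $\mathfrak{d}(\mathbf{e}_j,\mathbf{x})^2=2$ for $j\in N(\mathbf{x},c+\beta)$ then yields a quadratic in $\beta$ whose solutions are exactly the values listed in \eqref{eq:beta}.

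For the converse, I would start from $\mathbf{x}\in T_d(k,\beta)$ with $\beta$ one of the admissible values and simply evaluate $\mathfrak{d}(\mathbf{e}_i,\mathbf{x})^2=\sum_{l} x_l^2-2x_i+1$, which depends on $i$ only through $x_i\in\{c,c+\beta\}$ and so takes at most two values. By the choice of $\beta$ the value corresponding to $x_i=c+\beta$ equals $2$, and the difference identity returns $\alpha=2+2\beta$ for $x_i=c$; thus $A(R_d\cup\{\mathbf{x}\})\subseteq\{\sqrt{2},\sqrt{\alpha}\}$. It then remains only to confirm that exactly two distances occur, i.e.\ that $\alpha\neq 2$ (equivalently $\beta\neq 0$) and $\alpha>0$ (equivalently $\beta>-1$), so that $\sqrt{\alpha}$ is a genuine real distance different from $\sqrt{2}$. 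The boundary case $k=d+1$ I would treat by hand: here $\mathbf{x}$ is the centroid $\tfrac{1}{d+1}(1,\ldots,1)$, every vertex is equidistant from it, and a one-line calculation gives $\alpha=d/(d+1)\neq 2$.

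I expect the main obstacle to be the bookkeeping of admissible roots rather than any single hard computation. The crux is the identity and inequality
\[
\frac{k-\sqrt{k(d+1)(d+2-k)}}{k(d+1-k)}=-\frac{d+2}{k+\sqrt{k(d+1)(d+2-k)}}\geq -1,
\]
with equality precisely when $k=1$: this shows that the ``minus'' root never falls below $-1$ and meets the forbidden value $\beta=-1$ (that is, $\alpha=0$, where $\mathbf{x}$ would coincide with a vertex) only when $k=1$, which is exactly why that case is singled out with the single value $\beta=1+2/d$ coming from the ``plus'' root. One must also check that neither root vanishes for $2\leq k\leq d$ (vanishing would force $k=d+1$), so that $\beta\neq 0$ is automatic there. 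Verifying that each of the three regimes $k=1$, $2\leq k\leq d$, and $k=d+1$ retains exactly the right admissible values of $\beta$ is the only delicate point; the distance evaluations themselves are routine.
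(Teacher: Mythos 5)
Your proposal is correct and takes essentially the same route as the paper, whose proof is exactly the computation preceding the proposition: the identity $\mathfrak{d}(\mathbf{x},\mathbf{e}_i)^2-\mathfrak{d}(\mathbf{x},\mathbf{e}_j)^2=2(x_i-x_j)$ forcing coordinates into $\{c,c+\beta\}$, the determination of $c$ from $\sum_i x_i=1$, the quadratic for $\beta$ coming from $\mathfrak{d}(\mathbf{e}_j,\mathbf{x})^2=2$, and the inequality $\frac{k-\sqrt{k(d+1)(d+2-k)}}{k(d+1-k)}=-\frac{d+2}{k+\sqrt{k(d+1)(d+2-k)}}\geq -1$ with equality only at $k=1$, which excludes the forbidden root $\beta=-1$. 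The only difference is that you write out the sufficiency direction and the checks $\beta\neq 0,-1$ explicitly, which the paper leaves implicit.
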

We consider when 2 elements $\mathbf{x}$, $\mathbf{y}$ of $T_d(k,\beta)$ can be simultaneously added to $R_d$ while maintaining the $2$-distance condition. Let $\mathfrak{l}=\mathfrak{l}(\mathbf{x}, \mathbf{y})=|\{i \colon\, x_i\ne y_i\}|/2$ for $\mathbf{x}=(x_1,\ldots, x_{d+1})$, $\mathbf{y}=(y_1,\ldots, y_{d+1})$. 
One has
\[
\mathfrak{d}(\mathbf{x}, \mathbf{y})^2=2 \beta^2\mathfrak{l}  \in \{2,\alpha \},
\] 
and 
$
\mathfrak{l}\in \{1/\beta^2,(\beta+1)/\beta^2\}.
$
Thus the following follows. 
\begin{proposition} \label{prop:l}
Let $\mathfrak{l}$, $T_d(k,\beta)$, and $R_d$ be defined as above. 
Suppose $d$, $k$, and $\beta$ satisfy the condition from Proposition~\ref{prop:1}. 
Let $X$ be a subset of $T_d(k,\beta)$. 
Then, 
$R_d \cup X$ is a $2$-distance set 
if and only if 
$\mathfrak{l}(\mathbf{x}, \mathbf{y}) \in \{1/\beta^2,(\beta+1)/\beta^2\}$ for any $\mathbf{x}, \mathbf{y} \in X$ with $\mathbf{x} \ne \mathbf{y}$. 
\end{proposition}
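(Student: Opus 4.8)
The plan is to reduce the $2$-distance condition on $R_d \cup X$ to a condition concerning only pairs within $X$, using Proposition~\ref{prop:1} to dispose of every distance that involves at least one vertex of the simplex.

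First I would partition the pairs of $R_d \cup X$ into three types: two simplex vertices $\mathbf{e}_i, \mathbf{e}_j$; a vertex and an added vector $\mathbf{e}_i, \mathbf{x}$; and two added vectors $\mathbf{x}, \mathbf{y}$. Pairs of the first type always realize the distance $\sqrt{2}$. For pairs of the second type, since $\mathbf{x} \in T_d(k,\beta)$ with $\beta$ as in Proposition~\ref{prop:1}, the set $R_d \cup \{\mathbf{x}\}$ is already a $2$-distance set with distances exactly $\sqrt{2}$ and $\sqrt{\alpha}$, where $\alpha = 2\beta + 2 \ne 2$; in particular every such distance lies in $\{\sqrt{2},\sqrt{\alpha}\}$ regardless of which other vectors belong to $X$. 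Note also that both values genuinely occur in $R_d \cup X$ whenever $X \ne \emptyset$: the value $\sqrt{2}$ appears inside $R_d$ (as $d \ge 2$), and $\sqrt{\alpha}$ appears because $\mathfrak{d}(\mathbf{e}_i,\mathbf{x})^2 = \alpha$ for any $i \in N(\mathbf{x},c)$, which is nonempty since $k \ge 1$.

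Consequently the only pairs that can introduce a third distance are those of the third type, and here I would invoke the computation recorded just above the statement: because $\mathbf{x}$ and $\mathbf{y}$ each have exactly $k$ coordinates equal to $c$, the positions where they differ number exactly $2\mathfrak{l}(\mathbf{x},\mathbf{y})$, each contributing $\beta^2$, so $\mathfrak{d}(\mathbf{x},\mathbf{y})^2 = 2\beta^2\mathfrak{l}(\mathbf{x},\mathbf{y})$. Solving $2\beta^2\mathfrak{l} \in \{2,\alpha\} = \{2, 2\beta+2\}$ gives exactly $\mathfrak{l} \in \{1/\beta^2, (\beta+1)/\beta^2\}$, which is the condition in the statement.

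The two directions then follow by bookkeeping. For the ``if'' part, the assumed $\mathfrak{l}$-condition forces every third-type distance into $\{\sqrt{2},\sqrt{\alpha}\}$, so the whole distance set of $R_d \cup X$ is contained in---and, by the remark above, equals---$\{\sqrt{2},\sqrt{\alpha}\}$, making $R_d \cup X$ a $2$-distance set. For the ``only if'' part, if $R_d \cup X$ is a $2$-distance set then its two distances are forced to be $\sqrt{2}$ (by $R_d$) and $\sqrt{\alpha}$ (by any single $\mathbf{x} \in X$ together with Proposition~\ref{prop:1}), so every third-type distance lies in $\{\sqrt{2},\sqrt{\alpha}\}$ and the equivalence above returns the $\mathfrak{l}$-condition. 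I do not anticipate a real obstacle: the substance is simply recognizing that Proposition~\ref{prop:1} already fixes the two admissible distances, reducing the claim to the pairs inside $X$; the only point deserving a moment's care is checking that both $\sqrt{2}$ and $\sqrt{\alpha}$ actually occur, so that the set is genuinely $2$-distance rather than $1$-distance.
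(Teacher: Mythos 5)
Your proposal is correct and follows essentially the same route as the paper: the paper's own justification is the short computation preceding the proposition, namely that distances involving a simplex vertex are handled by Proposition~\ref{prop:1}, while for $\mathbf{x},\mathbf{y}\in T_d(k,\beta)$ one has $\mathfrak{d}(\mathbf{x},\mathbf{y})^2=2\beta^2\mathfrak{l}\in\{2,\alpha\}$, giving $\mathfrak{l}\in\{1/\beta^2,(\beta+1)/\beta^2\}$. Your added check that both distances $\sqrt{2}$ and $\sqrt{\alpha}$ actually occur is a small point of care the paper leaves implicit, but it does not change the substance of the argument.
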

From Proposition \ref{prop:l}, 
we would like to construct a large 2-distance subset of $T_d(k,\beta)$ with only particular distances $1/\beta^2$ and $(\beta+1)/\beta^2$ to obtain a large 2-distance subset of $H_d$  containing $R_d$. 

\label{sec:1}

\section{Maximal 2-distance sets that contain the regular simplex}
\label{sec:3}

From Section~\ref{sec:1}, 
a construction of large 2-distance subsets of $T_d(k,\beta)$ gives large 2-distance sets in $H_d$ that contain the $d$-dimensional regular simplex $R_d$. 
However, if dimension $d$ is not small,  it is a hard problem to construct large 
$m$-distance sets.  Indeed, $T_d(k,\beta)$ is isometric to the Johnson association scheme $J(d+1,k)$, and largest $m$-distance sets in $J(d+1,k)$ are investigated in \cite{BM11,MN11}. There is no systematic construction of $m$-distance sets in $J(d+1,k)$ except for representations of association schemes.  
We suppose the LRS ratio is an integer in order to find large 2-distance sets. 

Larman--Rogers--Seidel \cite{LRS77} proved that 
if a $2$-distance set $X$ in $\mathbb{R}^d$ has at least 
$2d+4$ points,  then for 2 distances $a,b$ with $a<b$,  
there exists an integer $s$ such that $a^2 /b^2= (s-1)/s$ 
and $2\leq s \leq 1/2+\sqrt{d/2}$. 
The condition $|X| \geq 2d+4$ was improved to $|X| \geq 2d+2$ \cite{N81}. 
The integer $s$ is called the {\it LRS ratio} of a $2$-distance set. 
We would like to construct a maximal $2$-distance set $X$   
with $|X| \geq 2d+2$ that contains 
the regular simplex.  
In the first part of this section, 
we prove that if a given LRS ratio $s$ is an integer, then there are only finitely many choices of $(d,k,\beta)$.   The following is the key theorem. 
\begin{theorem} \label{lem:key}
Let  $R_d$ be the regular simplex $\{\mathbf{e}_1,\ldots, \mathbf{e}_{d+1}\}$. Let $T_d(k,\beta)=\{\mathbf{x} \in H_d \colon\, \forall i, x_i \in \{c,c+\beta \}, |N(\mathbf{x},c)|=k  \}$, where  $N(\mathbf{x},a)=\{i \colon\, x_i=a\}$ and 
$
c=(1-(d+1-k)\beta)/(d+1)  
$.
Suppose $R_d \cup \{ \mathbf{x} \} $ has only two distances $\sqrt{2}, \sqrt{\alpha}$
for $\mathbf{x} \in T_d(k,\beta)$. Then the following follow. 
\begin{enumerate}
\item Suppose $\alpha<2$ and $s\geq 2$ satisfy $\alpha /2 =(s-1)/s$, namely the LRS ratio is $s$.  Then 
$2 \leq k \leq d$, $k\ne s^2$, $\beta=-1/s$, and 
\begin{equation}\label{eq:thm3.1_1}
d=k+s^2-2s-1 +\frac{s^2(s-1)^2}{k-s^2}. 
\end{equation}
\item Suppose $\alpha>2$ and $s\geq 2$ satisfy $ 2/\alpha =(s-1)/s$, namely the LRS ratio is $s$. Then $2 \leq k \leq d$, $d+2\geq k+s$, $k\ne (s-1)^2$, $\beta=1/(s-1)$, and 
\begin{equation} \label{eq:thm3.1_2}
d=k+s^2-2 +\frac{s^2(s-1)^2}{k-(s-1)^2}. 
\end{equation}
\end{enumerate}
\end{theorem}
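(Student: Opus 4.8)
The starting point of my plan is to extract one governing algebraic identity. Since $\mathbf{x}\in T_d(k,\beta)$ has $k$ coordinates equal to $c$ and $d+1-k$ equal to $c+\beta$, computing $\|\mathbf{x}\|^2$ and imposing $\mathfrak{d}(\mathbf{e}_j,\mathbf{x})^2=2$ for $j\in N(\mathbf{x},c+\beta)$ — exactly the computation behind Proposition~\ref{prop:1} — shows that $\beta$ is a root of
\[
k(d+1-k)\beta^2-2k\beta-(d+2)=0,
\]
whose two solutions are the $\pm$ expressions in \eqref{eq:beta}. I would keep this quadratic, rather than the solved form of $\beta$, as the main tool. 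The value of $\beta$ is then pinned down immediately from the LRS ratio: because $\beta=(\alpha-2)/2$, the hypothesis $\alpha/2=(s-1)/s$ in part $(1)$ gives $\beta=-1/s$, while $2/\alpha=(s-1)/s$ in part $(2)$ gives $\alpha=2s/(s-1)$ and hence $\beta=1/(s-1)$. This disposes of the claimed values of $\beta$ with essentially no work.

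With $\beta$ known, I would substitute it into the governing quadratic. In each case the $\beta^2$- and $\beta$-terms combine so that the equation becomes \emph{linear} in $d$: in part $(1)$, clearing $s^2$ gives $d(k-s^2)=k^2-k-2ks+2s^2$, and in part $(2)$, clearing $(s-1)^2$ gives $d(k-(s-1)^2)=k^2-3k+2ks+2(s-1)^2$. Carrying out the polynomial division in $k$ recovers exactly the stated dimension formulas \eqref{eq:thm3.1_1} and \eqref{eq:thm3.1_2}. The exclusions $k\ne s^2$ and $k\ne (s-1)^2$ appear here automatically: setting $k$ equal to the excluded value collapses the linear equation to $-s^2(s-1)^2=0$, which is impossible for $s\ge 2$.

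For the range $2\le k\le d$ I would invoke the two degenerate cases of \eqref{eq:beta}. If $k=1$ then $\beta=1+2/d$, which is positive and strictly larger than $1$, so it equals neither $-1/s<0$ (part $1$) nor $1/(s-1)\le 1$ (part $2$). If $k=d+1$ then $\beta=-\tfrac{d+2}{2(d+1)}<0$, which cannot equal the positive $1/(s-1)$ of part $(2)$; and equating it to $-1/s$ in part $(1)$ forces $2(d+1)=s(d+2)$, impossible since $s(d+2)\ge 2(d+2)>2(d+1)$. Hence $k\notin\{1,d+1\}$. Finally, for the extra inequality $d+2\ge k+s$ in part $(2)$, I would use that $\beta=1/(s-1)>0$ must be the $+$ root of \eqref{eq:beta}; isolating the radical gives
\[
\sqrt{k(d+1)(d+2-k)}=\frac{k(d+2-k-s)}{s-1},
\]
and nonnegativity of the left-hand side forces $d+2-k-s\ge 0$. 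The analogous step in part $(1)$, where $\beta=-1/s<0$ is the $-$ root, yields $\sqrt{k(d+1)(d+2-k)}=k(d+1-k+s)/s$, which is automatically nonnegative; this explains why no such inequality is needed in part $(1)$.

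The main obstacle is bookkeeping rather than conceptual: one must keep the two sign cases straight — which root of \eqref{eq:beta} corresponds to the prescribed $\beta$, and the consequent direction of the radical inequality — and carry out the division cleanly so that the solved value of $d$ matches the stated closed forms. Once the governing quadratic and the forced value of $\beta$ are in hand, every assertion of the theorem is determined.
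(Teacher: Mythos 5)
Your proposal is correct, and its skeleton coincides with the paper's: $\beta$ is pinned to $-1/s$ or $1/(s-1)$ directly from the LRS hypothesis, the cases $k=1$ and $k=d+1$ are eliminated through the degenerate branches of \eqref{eq:beta}, and the exclusions $k\ne s^2$, $k\ne (s-1)^2$ come from the collapse of the resulting equation to $s^2(s-1)^2=0$. The one genuinely different step is the central computation for $2\le k\le d$. The paper equates the solved radical expression in \eqref{eq:beta} to the prescribed $\beta$, isolates the radical, and squares; this produces the factored quadratics in $d$, namely \eqref{eq:3-1} and \eqref{eq:3-2}, whose extraneous factor $d-(k-1)$ must then be discarded using $k\ne d+1$. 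You instead retain the unsolved quadratic $k(d+1-k)\beta^2-2k\beta-(d+2)=0$ (whose two roots are precisely the expressions in \eqref{eq:beta}) and substitute the constant value of $\beta$, which makes the equation \emph{linear} in $d$: no squaring occurs, so no spurious factor ever appears, and \eqref{eq:thm3.1_1}, \eqref{eq:thm3.1_2} follow by one polynomial division; I checked the resulting linear relations and divisions, and they are exact. The trade-off is the side condition $d+2\ge k+s$ of part (2): in the paper it falls out automatically at the moment the radical is isolated before squaring, whereas you must step back to the radical form --- noting that the minus branch of \eqref{eq:beta} equals $-\frac{d+2}{k+\sqrt{k(d+1)(d+2-k)}}<0$, so the positive $\beta=1/(s-1)$ forces the plus branch --- and read off $d+2-k-s\ge 0$ from nonnegativity in $\sqrt{k(d+1)(d+2-k)}=k(d+2-k-s)/(s-1)$. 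You do exactly this, and your companion remark that in part (1) the isolated radical equals $k(d+1-k+s)/s$, which is automatically nonnegative, correctly explains why part (1) carries no analogous condition.
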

\begin{proof}
Since $R_d \cup \{ \mathbf{x} \} $ is a $2$-distance set, $\beta$ can be expressed by $d$ and $k$ by Proposition~\ref{prop:1}. 

(1) Suppose $\alpha /2 =(s-1)/s$, namely $\beta =(\alpha-2)/2=-1/s<0$. 
For $k=1$, we have $\beta=1+2/d>0$, which contradicts $\beta<0$. 
For $k=d+1$, we have 
\[
\beta=-\frac{d+2}{2(d+1)}=-\frac{1}{s}, 
\]  
which is transformed to $(s-2)d=2-2s$. For $s\geq 2$, we have $(s-2)d\geq 0$ and $2-2s<0$, a contradiction. 
For $2 \leq k \leq d$, we have
\[
\beta=\frac{k-\sqrt{k(d+1)(d+2-k)}}{k(d+1-k)}=-\frac{1}{s}, 
\]
which is transformed to 
\begin{equation} \label{eq:3-1}
\big( (k-s^2)d-(k^2-(2s+1)k+2s^2) \big) \big(d-(k-1)\big)=0.
\end{equation}
Since $k\ne d+1$, we have 
\[
d=\frac{k^2-(2s+1)k +2s^2}{k-s^2}=k+s^2-2s-1+\frac{s^2(s-1)^2}{k-s^2}
\]
for $k\ne s^2$. 
For $k=s^2$, we have $k^2-(2s+1)k+2s^2=s^2(s-1)^2\ne 0$, which contradicts 
\eqref{eq:3-1}. 
 
(2) Suppose $2 /\alpha =(s-1)/s$, namely $\beta =(\alpha-2)/2=1/(s-1)>0$. 
For $k=1$, we have $\beta=1+2/d=1/(s-1)$. It is transformed to 
$s=1+d/(d+2)<2$, a contradiction.  
For $k=d+1$, we have 
\[
\beta=-\frac{d+2}{2(d+1)}<0, 
\]  
which contradicts $\beta>0$.  
For $2 \leq k \leq d$, we have
\[
\beta=\frac{k+\sqrt{k(d+1)(d+2-k)}}{k(d+1-k)}=\frac{1}{s-1}, 
\]
which is transformed to 
\begin{equation} \label{eq:3-2}
\big( (k-(s-1)^2)d-(k^2+(2s-3)k+2(s-1)^2) \big) \big(d-(k-1)\big)=0
\end{equation}
with $d+2\geq k+s$. 
Since $k\ne d+1$, we have 
\[
d=\frac{k^2+(2s-3)k +2(s-1)^2}{k-(s-1)^2}=k+s^2-2+\frac{s^2(s-1)^2}{k-(s-1)^2}
\]
for $k\ne (s-1)^2$. 
For $k=(s-1)^2$, we have $k^2+(2s-3)k+2(s-1)^2=s^2(s-1)^2\ne 0$, which contradicts 
\eqref{eq:3-2}. 
\end{proof}
\begin{corollary}\label{coro:key}
Let $T_d(k,\beta)$ and $R_d$ be defined as in Theorem~\ref{lem:key}.  Let $s$ be an integer at least 2, and $\mathbf{x} \in T_d(k,\beta)$.
Then there exist only finitely many choices of $(d,k,\beta)$ such
that $R_d \cup \{\mathbf{x} \}$ is a $2$-distance set with LRS ratio $s$.  
\end{corollary}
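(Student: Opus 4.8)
The plan is to derive the corollary directly from Theorem~\ref{lem:key}, which has already done the substantial work of pinning down $\beta$ and expressing $d$ in terms of $k$ and $s$. First I would split into the two cases of that theorem according to whether $\alpha<2$ or $\alpha>2$. In either case the value of $\beta$ is forced: in the first case $\beta=-1/s$ and in the second $\beta=1/(s-1)$, each a single number determined by $s$. Thus $\beta$ contributes no freedom to the triple $(d,k,\beta)$, and it remains only to show that finitely many pairs $(d,k)$ can occur.

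Next I would exploit the fact that $d$ and $k$ are integers. In the first case Theorem~\ref{lem:key} gives
\[
d=k+s^2-2s-1+\frac{s^2(s-1)^2}{k-s^2},
\]
with $k\ne s^2$, so the denominator is nonzero. Since $d$, $k$, and $s$ are integers, the summand $k+s^2-2s-1$ is an integer, and hence the remaining term $s^2(s-1)^2/(k-s^2)$ must itself be an integer. Consequently $k-s^2$ is a divisor of the fixed positive integer $s^2(s-1)^2$. A nonzero integer has only finitely many divisors, so only finitely many values of $k-s^2$, and therefore only finitely many $k$, are possible; each admissible $k$ then determines $d$ uniquely by the displayed formula. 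The second case is identical after replacing $s^2$ by $(s-1)^2$ in the denominator and using $k\ne(s-1)^2$, with $d$ given by
\[
d=k+s^2-2+\frac{s^2(s-1)^2}{k-(s-1)^2}.
\]

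Putting the two cases together, each choice of the sign of $\alpha-2$ fixes $\beta$, forces $k$ to lie in the finite set of integers for which $k-s^2$ (respectively $k-(s-1)^2$) divides $s^2(s-1)^2$, and then determines $d$. Hence only finitely many triples $(d,k,\beta)$ arise. I do not expect any genuine obstacle here: the real content was already established in Theorem~\ref{lem:key}, and the single point that requires a moment's care is that the integrality of $d$ upgrades the rational summand to an exact divisibility condition on the fixed numerator $s^2(s-1)^2$. It is worth noting that the inequalities $2\le k\le d$ alone do \emph{not} suffice to bound $k$ once $s\ge 3$ (then $s^2-2s-1>0$ and $d>k$ holds for all large $k>s^2$), so the divisibility observation is essential rather than cosmetic.
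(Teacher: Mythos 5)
Your proposal is correct and follows exactly the paper's own argument: invoke Theorem~\ref{lem:key} to fix $\beta$ and express $d$ via \eqref{eq:thm3.1_1} or \eqref{eq:thm3.1_2}, then use integrality of $d$ to conclude that $k-s^2$ (resp.\ $k-(s-1)^2$) divides $s^2(s-1)^2$, leaving finitely many possibilities. Your closing remark that the bounds $2\le k\le d$ alone would not suffice for $s\ge 3$ is a sensible sanity check, but it is not needed beyond what the paper states.
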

\begin{proof}
Since $R_d \cup \{ \mathbf{x} \} $ is a $2$-distance set, $\beta$ can be expressed by $d$ and $k$ by Proposition~\ref{prop:1}. 
By Theorem~\ref{lem:key}, 
$(d,k)$ satisfies \eqref{eq:thm3.1_1} or \eqref{eq:thm3.1_2}. 
Since $d$ is an integer, $s^2(s-1)^2$ can be divided by $k-s^2$ or $k-(s-1)^2$. This implies the assertion.    
\end{proof}
If the cardinality of a 2-distance set is at least $2d+2$, then the LRS ratio $s$ is an integer and there are only finitely 
many pairs $(d,k)$ by Corollary~\ref{coro:key}.  
The distances of 
a 2-distance subset of $T_d(k,\beta)$ added to $R_d$ can be expressed by LRS ratio $s$. 


\begin{lemma} \label{lem:l}
Let $T_d(k,\beta)$ and $R_d$ be defined as in Theorem~\ref{lem:key}. 
 Let $\mathbf{x}=(x_1,\ldots,x_{d+1}), \mathbf{y}= (y_1,\ldots, y_{d+1}) \in T_d(k, \beta)$ with $\mathbf{x}\ne \mathbf{y}$ and $\mathfrak{l}(\mathbf{x}, \mathbf{y})=|\{i \mid x_i \ne y_i \}|/2$. 
\begin{enumerate}
\item Suppose  $\alpha<2$. 
The set $R_d \cup \{\mathbf{x}, \mathbf{y}\} $ has only two distances $\sqrt{2}$, $\sqrt{\alpha}$ and the LRS ratio $s$ is an integer  
if and only if $(d,k)$ satisfies equation \eqref{eq:thm3.1_1}, $\beta=-1/s$, and $\mathfrak{l}(\mathbf{x},\mathbf{y}) \in \{s^2, s(s-1)\}$. 
\item Suppose  $\alpha>2$. 
The set $R_d \cup \{\mathbf{x}, \mathbf{y}\} $ has only two distances $\sqrt{2}$, $\sqrt{\alpha}$ and the LRS ratio $s$ is an integer  
if and only if $(d,k)$ satisfies equation \eqref{eq:thm3.1_2}, 
$\beta=1/(s-1)$ and $\mathfrak{l}(\mathbf{x},\mathbf{y}) \in \{(s-1)^2, s(s-1)\}$.   
\end{enumerate}
\end{lemma}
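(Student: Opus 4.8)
The plan is to obtain both parts directly by combining Theorem~\ref{lem:key} with Proposition~\ref{prop:l}, reducing everything to a short arithmetic simplification of the set $\{1/\beta^2,(\beta+1)/\beta^2\}$. Since the two parts are completely parallel, with the roles of $\beta=-1/s$ and $\beta=1/(s-1)$ interchanged, I would prove part (1) in full and remark that part (2) follows verbatim.

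For the forward direction of part (1), assume $R_d\cup\{\mathbf{x},\mathbf{y}\}$ has exactly the two distances $\sqrt{2},\sqrt{\alpha}$ with $\alpha<2$ and integer LRS ratio $s$. Because $\mathbf{x},\mathbf{y}\in T_d(k,\beta)$ and $\mathbf{x}\neq\mathbf{y}$, the index sets $N(\mathbf{x},c)$ and $N(\mathbf{x},c+\beta)$ are both nonempty, so the distances from $\mathbf{x}$ to $R_d$ already realize both $\sqrt{2}$ and $\sqrt{\alpha}$; hence the subset $R_d\cup\{\mathbf{x}\}$ is itself a $2$-distance set with the same two distances. Theorem~\ref{lem:key}(1) then applies and yields $2\le k\le d$, $\beta=-1/s$, and that $(d,k)$ satisfies \eqref{eq:thm3.1_1}. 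To control $\mathfrak{l}(\mathbf{x},\mathbf{y})$, I would invoke the computation preceding Proposition~\ref{prop:l}, namely $\mathfrak{d}(\mathbf{x},\mathbf{y})^2=2\beta^2\mathfrak{l}(\mathbf{x},\mathbf{y})$; since this value lies in $\{2,\alpha\}$, Proposition~\ref{prop:l} forces $\mathfrak{l}(\mathbf{x},\mathbf{y})\in\{1/\beta^2,(\beta+1)/\beta^2\}$. Substituting $\beta=-1/s$ gives $1/\beta^2=s^2$ and $(\beta+1)/\beta^2=s^2-s=s(s-1)$, so $\mathfrak{l}(\mathbf{x},\mathbf{y})\in\{s^2,s(s-1)\}$, as claimed.

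For the converse, assume $(d,k)$ satisfies \eqref{eq:thm3.1_1}, $\beta=-1/s$, and $\mathfrak{l}(\mathbf{x},\mathbf{y})\in\{s^2,s(s-1)\}$. By Theorem~\ref{lem:key}(1) these values of $(d,k,\beta)$ are exactly those for which $\beta=-1/s$ is the admissible value of $\beta$ in Proposition~\ref{prop:1}, so each single-point extension $R_d\cup\{\mathbf{x}\}$ is a $2$-distance set with distances $\sqrt{2},\sqrt{\alpha}$, $\alpha<2$, and LRS ratio $s$. The same substitution shows $\{1/\beta^2,(\beta+1)/\beta^2\}=\{s^2,s(s-1)\}$, so the hypothesis on $\mathfrak{l}(\mathbf{x},\mathbf{y})$ is precisely the condition in Proposition~\ref{prop:l}; hence $R_d\cup\{\mathbf{x},\mathbf{y}\}$ is a $2$-distance set. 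Part (2) is identical after replacing $\beta=-1/s$ by $\beta=1/(s-1)$, for which $1/\beta^2=(s-1)^2$ and $(\beta+1)/\beta^2=s(s-1)$, and using \eqref{eq:thm3.1_2} together with Theorem~\ref{lem:key}(2) in place of their counterparts. I expect no serious obstacle: the only point requiring care is the reduction from the pair to the single-point extensions, i.e. verifying that both distances genuinely occur so that Theorem~\ref{lem:key} is applicable, while the remainder is the mechanical evaluation of $\{1/\beta^2,(\beta+1)/\beta^2\}$ at the two values of $\beta$.
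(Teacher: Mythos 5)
Your proposal is correct and follows exactly the paper's own route: the paper proves this lemma by citing Theorem~\ref{lem:key} together with Proposition~\ref{prop:l}, which is precisely your combination, and your substitutions $1/\beta^2=s^2$, $(\beta+1)/\beta^2=s(s-1)$ for $\beta=-1/s$ (respectively $(s-1)^2$ and $s(s-1)$ for $\beta=1/(s-1)$) are the intended arithmetic. Your extra care in noting that $\mathbf{x}\ne\mathbf{y}$ forces $k\le d$, so that both distances already occur in $R_d\cup\{\mathbf{x}\}$ and Theorem~\ref{lem:key} applies, is a detail the paper leaves implicit.
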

\begin{proof}
By Theorem~\ref{lem:key} 
and Proposition~\ref{prop:l}, the assertion follows. 
\end{proof}
Conditions \eqref{eq:thm3.1_1} and \eqref{eq:thm3.1_2}
in Theorem~\ref{lem:key} are quadratic in $k$ so there are at most two values of $k$ for fixed $d$ and $s$. 
In the following lemma, we explicitly connect these two values. 
\begin{lemma}\label{lem:two_k}
\begin{enumerate}
\item Let $d(k)=k+s^2-2s-1 +s^2(s-1)^2/(k-s^2)$ and $k'=s^2(s-1)^2/(k-s^2)+s^2$. 
Then $d(k')=d(k)$ holds. 
\item Let $d(k)=k+s^2-2 +s^2(s-1)^2/(k-(s-1)^2)$ and $k'=s^2(s-1)^2/(k-(s-1)^2)+(s-1)^2$. 
Then $d(k')=d(k)$ holds. 
\end{enumerate}
\end{lemma}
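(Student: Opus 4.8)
The plan is to reduce both identities to a single elementary symmetry: in each case the prescribed map $k \mapsto k'$ is really an involution on a suitably shifted coordinate, and $d(k)$ is manifestly invariant under that involution. Throughout I would abbreviate $C = s^2(s-1)^2$, since this same constant appears as the numerator in both formulas.

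First I would treat (1). Introduce the shifted variable $u = k - s^2$ and substitute $k = u + s^2$ into the definition of $d(k)$, obtaining
\[
d(k) = u + \frac{C}{u} + (2s^2 - 2s - 1),
\]
so that $d$ depends on $u$ only through the symmetric combination $u + C/u$. The key observation is that the definition $k' = C/(k-s^2) + s^2$ says exactly that $k' - s^2 = C/u$; writing $u' = k' - s^2$ gives $u' = C/u$, and hence also $C/u' = u$. Therefore $u' + C/u' = C/u + u$, and passing back through the same substitution yields $d(k') = d(k)$.

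For (2) the computation is identical after replacing the shift $s^2$ by $(s-1)^2$. Setting $v = k - (s-1)^2$ and substituting $k = v + (s-1)^2$ into the second formula gives $d(k) = v + C/v + \bigl((s-1)^2 + s^2 - 2\bigr)$, and a one-line check shows $(s-1)^2 + s^2 - 2 = 2s^2 - 2s - 1$, so the constant term is in fact the same as in (1). Since $k' = C/(k-(s-1)^2) + (s-1)^2$ again says precisely that $v' := k' - (s-1)^2 = C/v$, the symmetry of $v + C/v$ under $v \mapsto C/v$ gives $d(k') = d(k)$.

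There is no serious obstacle here; the only step that requires any attention is recognizing that the prescribed map $k \mapsto k'$ coincides with the involution $u \mapsto C/u$ (respectively $v \mapsto C/v$) on the recentered coordinate, after which invariance follows immediately from the symmetry of $u + C/u$. I would present the two parts uniformly, emphasizing that they are the same calculation with $s^2$ and $(s-1)^2$ interchanged as the point about which $k$ is centered, and with the two constant terms coinciding.
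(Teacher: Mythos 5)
Your proof is correct, and it is essentially the paper's own argument: the paper disposes of this lemma with the single line ``by direct calculation,'' and your substitution $u=k-s^2$ (resp.\ $v=k-(s-1)^2$) turning the map $k\mapsto k'$ into the involution $u\mapsto C/u$ with $C=s^2(s-1)^2$, under which $u+C/u$ is manifestly invariant, is just a cleanly organized way of carrying out that calculation. The additional observation that both parts share the constant term $2s^2-2s-1$ is a nice unification, but nothing beyond the direct verification the paper intends.
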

\begin{proof}
By direct calculation, we can prove $d(k')=d(k)$. 
\end{proof}
We show an equivalent condition for 2 vectors $\mathbf{x}\in T_d(k, \beta)$ and $\mathbf{y} \in T_d(k',\beta)$ to 
be simultaneously added to $R_d$. 
\begin{lemma}\label{lem:m}
Let $T_d(k,\beta)$ and $R_d$ be defined as in Theorem~\ref{lem:key}.  
Let $\mathbf{x}=(x_1,\ldots, x_{d+1})\in T_d(k,\beta)$, 
$\mathbf{y}=(y_1,\ldots, y_{d+1}) \in T_d(k',\beta)$ and 
$\mathfrak{m}(\mathbf{x},\mathbf{y})=|\{ i \colon\, x_i=c,y_i=c' \}|$, 
where $c=(1-(d+1-k)\beta)/(d+1)$ and $c'=(1-(d+1-k')\beta)/(d+1)$. 
\begin{enumerate}
\item Suppose $\alpha <2 $. The set $R_d\cup \{\mathbf{x},\mathbf{y}\}$ has only two distances $\sqrt{2}$, $\sqrt{\alpha}$ and the LRS ratio $s$ is an integer if and only if $(d,k)$ satisfies equation \eqref{eq:thm3.1_1}, $\beta=-1/s$, $k'=s^2(s-1)^2/(k-s^2)+s^2 \ne k$, and $\mathfrak{m}(\mathbf{x},\mathbf{y}) \in \{s^2,s(s-1)\}$.
\item  Suppose $\alpha >2$. The set $R_d\cup \{\mathbf{x},\mathbf{y}\}$ has only two distances $\sqrt{2}$, $\sqrt{\alpha}$ and the LRS ratio $s$ is an integer if and only if $(d,k)$ satisfies equation \eqref{eq:thm3.1_2}, 
$\beta=1/(s-1)$, $k'=s^2(s-1)^2/(k-(s-1)^2)+(s-1)^2\ne k$, and $\mathfrak{m}(\mathbf{x},\mathbf{y}) \in \{(s-1)^2,s(s-1)\}$.
\end{enumerate}
\end{lemma}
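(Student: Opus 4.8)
The plan is to compute the squared distance $\mathfrak{d}(\mathbf{x},\mathbf{y})^2$ for $\mathbf{x}\in T_d(k,\beta)$ and $\mathbf{y}\in T_d(k',\beta)$ explicitly in terms of the overlap parameter $\mathfrak{m}(\mathbf{x},\mathbf{y})$, and then to impose the two-distance condition $\mathfrak{d}(\mathbf{x},\mathbf{y})^2\in\{2,\alpha\}$ exactly as was done for the equal-level case in Proposition~\ref{prop:l} and Lemma~\ref{lem:l}. Here the two vectors live on different levels $k$ and $k'$ of the simplex, so unlike in Lemma~\ref{lem:l} their common value $c$ differs ($c\ne c'$), and the bookkeeping of coordinates must be done by cases according to which of the two values $c$ or $c+\beta$ each of $x_i$, $y_i$ takes. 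First I would record that $\sum_i x_i=\sum_i y_i=1$ forces $c-c'=(k'-k)\beta/(d+1)$, and I would partition the index set $\{1,\dots,d+1\}$ into the four classes determined by $(x_i\in\{c,c+\beta\})\times(y_i\in\{c',c'+\beta\})$, whose sizes are determined by $k$, $k'$, and the single free parameter $\mathfrak{m}=\mathfrak{m}(\mathbf{x},\mathbf{y})=|\{i:x_i=c,\ y_i=c'\}|$.

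Next I would expand $\mathfrak{d}(\mathbf{x},\mathbf{y})^2=\sum_i (x_i-y_i)^2$ over these four classes. On each class $x_i-y_i$ is a fixed value built from $\beta$ and the shift $c-c'$, so the sum collapses to a quadratic (in fact linear, after the constraints on the class sizes) expression in $\mathfrak{m}$ with coefficients depending only on $d,k,k',\beta$. I expect that after substituting $\beta=-1/s$ (in case (1)) or $\beta=1/(s-1)$ (in case (2)) together with relation~\eqref{eq:thm3.1_1} (resp.~\eqref{eq:thm3.1_2}) and the explicit pairing $k'=s^2(s-1)^2/(k-s^2)+s^2$ (resp.\ the analogue from Lemma~\ref{lem:two_k}), the expression $\mathfrak{d}(\mathbf{x},\mathbf{y})^2$ simplifies to an affine function of $\mathfrak{m}$. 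Setting this equal to $2$ and to $\alpha=2\beta+2$ then yields exactly two admissible values of $\mathfrak{m}$, which I anticipate will be $\{s^2,s(s-1)\}$ in case (1) and $\{(s-1)^2,s(s-1)\}$ in case (2), matching the statement; the condition $k'\ne k$ is precisely what guarantees the two levels are genuinely distinct and is inherited from Lemma~\ref{lem:two_k}.

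The forward ($\Leftarrow$) direction is then immediate: the hypotheses on $(d,k)$, $\beta$, $k'$, and $\mathfrak{m}$ are engineered so that every pairwise squared distance among $R_d\cup\{\mathbf{x},\mathbf{y}\}$ lies in $\{2,\alpha\}$ (the distances from $\mathbf{x},\mathbf{y}$ to the $\mathbf{e}_i$ are handled by Proposition~\ref{prop:1} and Theorem~\ref{lem:key}, and the distance between $\mathbf{x}$ and $\mathbf{y}$ by the computation just described). For the reverse ($\Rightarrow$) direction I would invoke Theorem~\ref{lem:key} to force $\beta=-1/s$ or $1/(s-1)$ and relation~\eqref{eq:thm3.1_1} or~\eqref{eq:thm3.1_2}, Lemma~\ref{lem:two_k} to identify $k'$, and then solve the affine equation for $\mathfrak{m}$ as above. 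The main obstacle will be the careful coordinate accounting across the four index classes when $c\ne c'$: one must verify that the class sizes are nonnegative and consistent with $|N(\mathbf{x},c)|=k$ and $|N(\mathbf{y},c')|=k'$, and that the algebra genuinely reduces to an affine relation in $\mathfrak{m}$ rather than a spurious quadratic; the elimination of the quadratic term is the delicate point, and it is exactly where the pairing of $k$ and $k'$ from Lemma~\ref{lem:two_k} is used.
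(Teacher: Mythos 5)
Your proposal takes essentially the same route as the paper's proof: expand $\mathfrak{d}(\mathbf{x},\mathbf{y})^2$ over the four coordinate classes to get the affine expression $\beta^2\bigl(k+k'-2\mathfrak{m}(\mathbf{x},\mathbf{y})-(k-k')^2/(d+1)\bigr)$, substitute $\beta$ and the pairing of $k$ and $k'$ from Lemma~\ref{lem:two_k}, and impose membership in $\{2,\alpha\}$, which yields exactly the stated overlap values (the paper's ``direct calculations'' are precisely this, giving $-2\mathfrak{m}/s^2-2/s+4$ in case (1); in case (2) the correct reduction is $-2\mathfrak{m}/(s-1)^2+2/(s-1)+4$, the middle sign printed in the paper being a typo, as the stated set $\{(s-1)^2,s(s-1)\}$ confirms). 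Two minor slips in your sketch: $c-c'=(k-k')\beta/(d+1)$, not $(k'-k)\beta/(d+1)$, and there is no quadratic term in $\mathfrak{m}$ to eliminate---the squared distance is automatically affine in $\mathfrak{m}$, with Lemma~\ref{lem:two_k} needed only to simplify the $\mathfrak{m}$-free part.
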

\begin{proof}
(1) By direct calculations, the square of the distance between  $\mathbf{x}\in T_d(k,\beta)$ and 
$\mathbf{y} \in T_d(k',\beta)$ is equal to $-2\mathfrak{m}(\mathbf{x},\mathbf{y})/s^2-2/s+4$. 
Since it should be $2$ or $\alpha$, the assertion follows. 

(2) By direct calculations, the square of the distance between  $\mathbf{x}\in T_d(k,\beta)$ and 
$\mathbf{y} \in T_d(k',\beta)$ is equal to $-2\mathfrak{m}(\mathbf{x},\mathbf{y})/(s-1)^2-2/(s-1)+4$. 
Since it should be $2$ or $\alpha$, the assertion follows. 
\end{proof}

We give several maximal 2-distance sets $X$ containing $R_d$ with $|X|\geq 2d+2$ in the following subsections. 
From Lemmas~\ref{lem:l} and \ref{lem:m}, 
we know the exact values of distances of a subset of $T_d(k,\beta) \cup T_d(k',\beta)$ that can be added to $R_d$. However it is hard to construct such a large subset especially for large $d$.  
It is so far hopeless to classify maximal 2-distance sets that contain the regular simplex. 
For $s=2$, since the dimension $d$ is not large,  we can determine the largeset $2$-distance set containing $R_d$ for each possible $d$. 
For $s\geq 3$, we construct maximal $2$-distance sets containing $R_d$ by adding the representation of  a quasi-symmetric design. 

\subsection{LRS ratio $s=2$, distances $1,\sqrt{2}$ ($\alpha=1$)}
By Theorem~\ref{lem:key}, 
$
d=k-1+4/(k-4)
$. 
Since $d$, $k$ are natural numbers, the possible $(d,k)$ are $(7,6)$, $(8,5)$, or $(8,8)$.  
By Lemma~\ref{lem:two_k}, there are at most 2 choices of $k$ for given $d$, in this case $k=5$ and $k'=8$ for $d=8$, and $k=k'=6$ for $d=7$. 
The set $T_d(k,\beta)$ is identified with the Johnson scheme $J(d+1,k)$, which is the set of 
all $k$-subsets of a $(d+1)$-point set equipped with the distance $\mathfrak{l}(\mathbf{x},\mathbf{y})$. Here $J(d+1,k)$ can be identified with a set of $(0,1)$-vectors in $\mathbb{R}^{d+1}$ with $k$ ones. 
When $d+1<2k$ holds, we use $J(d+1,\bar{k})$ instead of $J(d+1,k)$, where $\bar{k}=d+1-k$. 
We would like to find the largest subset $\mathcal{Y}$ of $J(d+1,k)$ with $\mathfrak{l}(\mathbf{x},\mathbf{y}) \in \{2,4\}$ considering Lemma~\ref{lem:l}. We use the notation $Y$ as the subset of $T_d(k,\beta)$ corresponding to $\mathcal{Y} \subset J(d+1,k)$.

For $(d,k)=(7,6)$, we consider a subset $\mathcal{Y}$ of $J(8,2)$ with $\mathfrak{l}(\mathbf{x},\mathbf{y}) \in \{2,4\}$. 
Since $\mathfrak{l}(\mathbf{x},\mathbf{y})=2$ for any $\mathbf{x},\mathbf{y} \in \mathcal{Y}$ with $\mathbf{x} \ne \mathbf{y}$, it follows that $|\mathcal{Y}| \leq 4$. 
This implies that $|R_d \cup Y| \leq 12$, which is smaller than $2d+2$. 
Therefore, for $(d,k)=(7,6)$, there does not exist a 2-distance set $X$ such that $X$ contains $R_d$ and $|X| \geq 2d+2$. 

For $(d,k)=(8,8)$, we consider a subset $\mathcal{Y}$ of $J(9,1)$ with $\mathfrak{l}(\mathbf{x},\mathbf{y}) \in \{2,4\}$. 
Since $\mathfrak{l}(\mathbf{x},\mathbf{y})\in \{ 2,4\}$ is impossible,  we cannot add 2 vectors in $T_d(k,\beta)$ to $R_d$. 
This implies that $|R_d \cup Y| \leq 10$, which is smaller than $2d+2$. 
  Therefore, for $(d,k)=(8,8)$, there does not exist a 2-distance set $X$ such that $X$ contains $R_d$ and $|X| \geq 2d+2$. 

For $(d,k)=(8,5)$, we consider a subset of $J(9,4)$ with $\mathfrak{l}(\mathbf{x},\mathbf{y})\in \{2,4\}$. 
Let $\mathcal{Y}$ be a largest subset of $J(9,4)$ with $\mathfrak{l}(\mathbf{x},\mathbf{y}) \in \{2,4\}$.  
The following lemma is used to determine $\mathcal{Y}$. 
\begin{lemma} \label{lem:op}
Let $J(n,k)$ be the Johnson scheme with distance function $\mathfrak{l}(\mathbf{x},\mathbf{y})$ for $n\geq 2k$.  
Let $X$ be a subset of $J(n,k)$. Suppose $X$ has only distance $k-t$ for some $t$. 
If $nt \geq k^2$ holds, then $|X| \leq n-1$. 
\end{lemma}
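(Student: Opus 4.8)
The plan is to translate the hypothesis into a statement about an intersecting family and then run a Fisher-type rank bound on the Gram matrix of the characteristic vectors. Identifying $J(n,k)$ with $(0,1)$-vectors having $k$ ones, two members $\mathbf{x},\mathbf{y}$ at distance $\mathfrak{l}(\mathbf{x},\mathbf{y})=k-t$ have symmetric difference of size $2(k-t)$, hence intersection exactly $t$; equivalently $\langle \mathbf{x},\mathbf{y}\rangle = t$ while $\langle \mathbf{x},\mathbf{x}\rangle = k$. So the hypothesis says $X=\{\mathbf{v}_1,\dots,\mathbf{v}_m\}\subset \mathbb{R}^n$ is a set of $(0,1)$-vectors whose Gram matrix is $G=(k-t)I_m+tJ_m$, where $J_m$ is the all-ones matrix. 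Since distinct $k$-sets give $\mathfrak{l}\geq 1$, we have $t<k$, so the eigenvalues $k-t$ (multiplicity $m-1$) and $(k-t)+tm$ (multiplicity $1$) are all positive; thus $G$ is positive definite, $\operatorname{rank} G=m$, and as the $\mathbf{v}_i$ live in $\mathbb{R}^n$ this already yields $m\leq n$.

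To sharpen this to $m \leq n-1$ I would exploit that every $\mathbf{v}_i$ lies on the affine hyperplane $\langle \mathbf{v}_i, \mathbf{1}\rangle = k$. Replacing each vector by its projection $\mathbf{w}_i=\mathbf{v}_i-\tfrac{k}{n}\mathbf{1}$ orthogonal to the all-ones vector $\mathbf{1}$ forces the $\mathbf{w}_i$ into the $(n-1)$-dimensional subspace $\mathbf{1}^{\perp}$, so the dimension of their linear span, and hence the rank of their Gram matrix $G'$, is at most $n-1$. A direct computation gives $\langle \mathbf{w}_i,\mathbf{w}_j\rangle=\langle\mathbf{v}_i,\mathbf{v}_j\rangle-k^2/n$, whence $G'=(k-t)I_m+(t-k^2/n)J_m$.

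The remaining step is to certify that $G'$ is still positive definite, and this is exactly where the hypothesis $nt\geq k^2$ is consumed. Its eigenvalues are $k-t$ (multiplicity $m-1$) and $(k-t)+(t-k^2/n)m$ (multiplicity $1$); the first is positive because $t<k$, and the second is positive precisely because $t-k^2/n\geq 0$ under $nt\geq k^2$. Hence $G'$ is positive definite, $\operatorname{rank}G'=m$, and therefore $m\leq n-1$, as claimed (the cases $m\leq 1$ being trivial). The one genuine subtlety, and the main obstacle if one is careless, is that the crude Gram matrix only sees the linear span and gives the weaker $m\leq n$; the improvement to $n-1$ forces the passage to $\mathbf{1}^{\perp}$, and it is the sign of the \emph{projected} coefficient $t-k^2/n$, rather than the original intersection number $t$, that governs positive definiteness. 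This is why the threshold $nt\geq k^2$ is the natural and sharp boundary for the method.
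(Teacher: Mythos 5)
Your proof is correct. It shares with the paper's proof the decisive first move: both project the characteristic vectors onto the hyperplane orthogonal to the all-ones vector (your $\mathbf{w}_i=\mathbf{v}_i-\tfrac{k}{n}\mathbf{1}$ is exactly the paper's map $f$), and both reduce the problem to the sign of the projected inner product $t-k^2/n$, which is precisely where the hypothesis $nt\geq k^2$ enters. The two arguments part ways at the final step. The paper treats $f(X)$ as a $1$-distance set on a sphere inside the $(n-1)$-dimensional hyperplane and finishes geometrically: an equilateral set of $n$ points in $\mathbb{R}^{n-1}$ would have to be a regular simplex, whose vertices subtend pairwise central angles $\arccos(-1/(n-1))>\pi/2$ about the origin, contradicting that all projected inner products are nonnegative (angles at most $\pi/2$); hence $|f(X)|\leq n-1$. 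This quietly invokes two nontrivial standard facts — the bound of $n$ points for equilateral sets in $\mathbb{R}^{n-1}$, and the uniqueness of the regular simplex in the equality case. You replace that geometry with linear algebra: the projected Gram matrix $(k-t)I_m+\bigl(t-\tfrac{k^2}{n}\bigr)J_m$ has eigenvalues $k-t>0$ (multiplicity $m-1$) and $(k-t)+\bigl(t-\tfrac{k^2}{n}\bigr)m>0$, hence is positive definite, so the $\mathbf{w}_i$ are linearly independent vectors in the $(n-1)$-dimensional space $\mathbf{1}^{\perp}$, giving $m\leq n-1$ immediately. Your version is more self-contained (no appeal to the classification of equilateral sets) and yields the slightly stronger conclusion that the projected vectors are linearly independent; the paper's version is shorter once those simplex facts are granted. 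Both proofs consume $nt\geq k^2$ at the same point and for the same reason.
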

\begin{proof}
Let $H$ be the hyperplane which is perpendicular to the all-ones vector and contains the origin.  
Let $f$ be the projection map from $J(n,k)$ to $H$ defined by
\[
f: (\underbrace{1,\ldots,1}_{k},\underbrace{0,\ldots,0}_{n-k} ) \mapsto 
\frac{1}{n} (\underbrace{n-k,\ldots,n-k}_{k},\underbrace{-k,\ldots,-k}_{n-k} ). 
\]
Since $\mathfrak{l}(\mathbf{x},\mathbf{y})=k-t$ for $\mathbf{x},\mathbf{y} \in X$ with $\mathbf{x}\ne \mathbf{y}$, we have $(f(\mathbf{x}),f(\mathbf{y}))=t-k^2/n$, where $(,)$ is 
the usual inner product of $\mathbb{R}^n$. 
Note that $f(X)$ is a $1$-distance set in $S^{n-1}\subset H$.  
If  $t-k^2/n\geq 0$ holds, then the central angle of any two vectors in $f(X)$ is at most $\pi/2$. Therefore $f(X)$ is not the $(n-1)$-dimensional regular simplex, which has central angle $\arccos(-1/(n-1)) >\pi/2$. This implies $|X| =|f(X)|\leq n-1$.  
\end{proof}

If $\mathcal{Y}$ has only distance $\mathfrak{l}(\mathbf{x},\mathbf{y})=2$, then
$|\mathcal{Y}|\leq 8$ by Lemma \ref{lem:op}, and hence $|R_d\cup Y| \leq 17 <2d+2$. 
Therefore, there are $\mathbf{x},\mathbf{y} \in \mathcal{Y}$ such that $\mathfrak{l}(\mathbf{x},\mathbf{y})=4$. 
Without loss of generality, let $\mathbf{x}=(1,1,1,1,0,0,0,0,0), \mathbf{y}=(0,0,0,0,1,1,1,1,0) \in J(9,4)$.  
If  the last entry of a vector $\mathbf{z} \in J(9,4)$ is 1, then 
$\mathfrak{l}(\mathbf{x},\mathbf{z})$ and $\mathfrak{l}(\mathbf{y},\mathbf{z})$ are not in $\{2,4\}$ simultaneously.  
This implies that $\mathcal{Y}$ can be identified with a subset of $J(8,4)$.
Since there does not exist $\mathbf{z} \in \mathcal{Y} \subset J(8,4)$ with $\mathbf{z} \ne \mathbf{x},\mathbf{y}$ such that $\mathfrak{l}(\mathbf{x},\mathbf{z})=4$ or $\mathfrak{l}(\mathbf{y},\mathbf{z})=4$, the 
graph $G=(\mathcal{Y},E)$ with $E=\{(\mathbf{x},\mathbf{y}) \colon\, \mathfrak{l}(\mathbf{x},\mathbf{y})=4\}$ is bipartite. 
A partite set of  $G$ is a subset $Z$ of $\mathcal{Y}$ that has only distance $\mathfrak{l}(\mathbf{x},\mathbf{y})=2$. 
Since $\mathcal{Y}$ can be identified with a subset of $J(8,4)$, we have $|Z| \leq 7$ by Lemma~\ref{lem:op}.   
Therefore $\mathcal{Y}=Z\cup \overline{Z}$ is largest, where 
\begin{multline*}
Z=\{(1,1,1,1,0,0,0,0,0), (1,1,0,0,1,1,0,0,0),(1,0,1,0,1,0,1,0,0),\\
(1,0,0,1,1,0,0,1,0),(1,1,0,0,0,0,1,1,0,0),
(1,0,1,0,0,1,0,1,0),(1,0,0,1,0,1,1,0,0)\}
\end{multline*}
and
\begin{multline*}
\overline{Z}=\{(0,0,0,0,1,1,1,1,0), (0,0,1,1,0,0,1,1,0),(0,1,0,1,0,1,0,1,0),\\
(0,1,1,0,0,1,1,0,0),(0,0,1,1,1,1,0,0,0),
(0,1,0,1,1,0,1,0,0),(0,1,1,0,1,0,0,1,0)\}. 
\end{multline*} 
These sets are obtained from the Hadamard matrix of order 8, and the set $\mathcal{Y}$ is unique up to permutations of coordinates. 
The set $\mathcal{Y}$ can be identified with a subset of $ T_8(5,-1/2)$. Each point in $Y$ has the last coordinate $1/3$. Let $\mathbf{z} \in T_8(8,-1/2)$ be the point that has the last coordinate $-1/3$, namely
\[
\mathbf{z}=\frac{1}{6}(1,1,1,1,1,1,1,1,-2). 
\]
The set $R_8 \cup Y \cup \{\mathbf{z}\}$ is a maximal $2$-distance set with 24 points.  
This 2-distance set is largest for $\alpha <2$ since at most 1 point in $T_8(8,-1/2)$ can be added to $R_8$ as proved above.

\subsection{LRS ratio $s=2$, distances $\sqrt{2}, 2$ ($\alpha=4$)}
By Theorem~\ref{lem:key},  
$
d=k+2+4/(k-1)
$. 
Since $d$, $k$ are natural numbers, the possible $(d,k)$ are $(7,3)$, $(8,2)$, or $(8,5)$. 
Here $k=2$ and $k'=5$ for $d=8$, and $k=k'=3$ for $d=7$. We would like to find the largest subset $\mathcal{Y}$ of $J(d+1,k)$ with $\mathfrak{l}(\mathbf{x},\mathbf{y}) \in \{1,2\}$ considering Lemma~\ref{lem:l}.

For $(d,k)=(7,3)$, we consider a subset $\mathcal{Y}$ of $J(8,3)$ with $\mathfrak{l}(\mathbf{x},\mathbf{y}) \in \{1,2\}$. 
The subset $\mathcal{Y}$ is a 1-intersecting family and the size of $\mathcal{Y}$ is at most 21 \cite{EKR61,W84}. Here $\mathcal{Y}$ is a {\it $t$-intersecting family} if $|\mathbf{x}\cap \mathbf{y}| \geq t$ for any $\mathbf{x},\mathbf{y} \in \mathcal{Y}$.   
The unique largest set is $\mathcal{Y}=\{(x_1,\ldots,x_8) \in J(8,3) \mid x_1=1\}$, up to permutations of coordinates \cite{EKR61,W84}. 
The set $R_7 \cup Y$ is a maximal 2-distance set with $29$ points. Indeed, $R_7 \cup Y$
is largest in all $2$-distance sets in $\mathbb{R}^7$  \cite{ES66}. 

For $(d,k)=(8,2)$, we consider a largest subset $\mathcal{Y}$ of $J(9,2)$ with $\mathfrak{l}(\mathbf{x},\mathbf{y}) \in \{1,2\}$. 
In this case, $\mathcal{Y}=J(9,2)$. 
The set $R_8 \cup J(9,2)$, whose size is 45,  is largest in all $2$-distance sets in $\mathbb{R}^8$ \cite{L97}.   

For $(d,k)=(8,5)$, we consider a subset $\mathcal{Y}$ of $J(9,4)$ with $\mathfrak{l}(\mathbf{x},\mathbf{y}) \in \{1,2\}$. 
Since $\mathcal{Y}$ is a $2$-intersecting family, 
the unique largest set is $\mathcal{Y}=\{(x_1,\ldots,x_9) \in J(9,4) \mid x_1=x_2=1 \}$ \cite{AK97}. 
The set $Y$ can be identified with a subset of $T_8(5,1)$. 
 Each point $\mathbf{z} \in T_8(2,1)$ cannot be added to $R_8\cup Y$ while maintaining the $2$-distance condition.  
Therefore $R_8 \cup Y$ is a maximal $2$-distance set with  $30$ points. 

The largest 2-distance set that contains $R_d$ with at least $2d+2$ points are summarized as follows. 
\begin{theorem}
Let $X$ be the largest 2-distance set in $\mathbb{R}^d$ that contains $R_d$ with distances $\sqrt{2}$ and $\sqrt{\alpha}$, where $\sqrt{2}$ is the distance of $R_d$.  
 If the LRS ratio is $2$, then $d=7,8$, $\alpha=1,4$ and we have the following. 
\begin{enumerate}
\item $|X| =12$ for $d=7$ and $\alpha=1$. 
\item $|X| =24$ for $d=8$ and $\alpha=1$.
\item $|X| =29$ for $d=7$ and $\alpha=4$. 
\item $|X| =45$ for $d=8$ and $\alpha=4$. 
\end{enumerate}
\end{theorem}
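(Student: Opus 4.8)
The plan is to assemble the theorem from the case analysis carried out in the two preceding subsections, organized by the regimes $\alpha<2$ (where $\alpha=1$) and $\alpha>2$ (where $\alpha=4$). First I would invoke Theorem~\ref{lem:key} with $s=2$ to enumerate all admissible pairs $(d,k)$. For $\alpha=1$ the relation is $d=k-1+4/(k-4)$; requiring $k-4$ to divide $4$ with $2\le k\le d$ leaves only $(d,k)\in\{(7,6),(8,5),(8,8)\}$. For $\alpha=4$ the relation is $d=k+2+4/(k-1)$; requiring $k-1$ to divide $4$ leaves only $(d,k)\in\{(7,3),(8,2),(8,5)\}$. In every case $d\in\{7,8\}$, which establishes the dimensional part of the statement.

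Next, for each admissible triple $(d,k,\beta)$ I would translate the construction problem, via Proposition~\ref{prop:l} and Lemma~\ref{lem:l}, into finding a largest subset $\mathcal{Y}$ of the Johnson scheme $J(d+1,k)$ (replacing $k$ by $d+1-k$ when $d+1<2k$) whose pairwise $\mathfrak{l}$-distances lie in the prescribed set: $\{s^2,s(s-1)\}=\{4,2\}$ when $\alpha=1$, and $\{(s-1)^2,s(s-1)\}=\{1,2\}$ when $\alpha=4$. The cardinality of the resulting set is then $(d+1)+|\mathcal{Y}|$, and whenever $d$ admits the two companion values $k,k'$ of Lemma~\ref{lem:two_k}, I would use Lemma~\ref{lem:m} to control how vectors of $T_d(k,\beta)$ and $T_d(k',\beta)$ may be combined.

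For the cases governed by classical extremal set theory the bounds follow directly: $(7,3)$ forces $\mathcal{Y}$ to be a $1$-intersecting family, bounded by the Erd\H{o}s--Ko--Rado theorem at $21$, so $|X|=8+21=29$; for $(8,2)$ the condition $\mathfrak{l}\in\{1,2\}$ holds automatically on all of $J(9,2)$, so $\mathcal{Y}=J(9,2)$ and $|X|=9+36=45$; and $(8,5)$ with $\alpha=4$ forces a $2$-intersecting family, bounded by the Ahlswede--Khachatrian theorem at $21$, giving $|X|=30<45$. The two degenerate cases for $\alpha=1$ are settled by elementary counting: $(7,6)$ admits at most four pairwise disjoint $2$-subsets, so $|X|\le 12$ (and this is attained), while $(8,8)$ admits at most one additional vector, so $|X|\le 10$.

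The step I expect to be the main obstacle is the case $(d,k)=(8,5)$ with $\alpha=1$. Here I would first apply Lemma~\ref{lem:op} to bound any subset of $J(9,4)$ realizing only the distance $\mathfrak{l}=2$, then observe that a distance-$4$ pair confines $\mathcal{Y}$ to $J(8,4)$, on which the distance-$4$ graph is bipartite; bounding each part again by Lemma~\ref{lem:op} yields the optimal Hadamard construction $Z\cup\overline{Z}$ of size $14$. Finally, passing to the companion value $k'=8$ and invoking Lemma~\ref{lem:m} shows that exactly one vector of $T_8(8,-1/2)$ can be adjoined, producing a maximal set of size $9+14+1=24$. Taking, for each fixed $(d,\alpha)$, the maximum over the admissible values of $k$ then yields the four stated cardinalities $12,24,29,45$.
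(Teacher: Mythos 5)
Your proposal is correct and follows essentially the same route as the paper: enumerating $(d,k)$ via Theorem~\ref{lem:key}, reducing to extremal subsets of $J(d+1,k)$ with prescribed $\mathfrak{l}$-distances via Lemma~\ref{lem:l}, invoking Erd\H{o}s--Ko--Rado and Ahlswede--Khachatrian for $\alpha=4$, and handling the case $(d,k)=(8,5)$, $\alpha=1$ through Lemma~\ref{lem:op}, the bipartite distance-$4$ graph, the Hadamard construction, and the single extra vector from $T_8(8,-1/2)$. The case analysis, bounds, and final cardinalities $12,24,29,45$ all match the paper's argument.
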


\subsection{Adding a quasi-symmetric design}
If the LRS ratio is greater than 2, it is hard to determine 
the largest 2-distance subset of $T_d(k,\beta)$ that can be added to $R_d$ because $d$ becomes large and $T_d(k,\beta)$ has large size. We try to add the representation of a quasi-symmetric design to $R_d$. A {\it quasi-symmetric design} is a combinatorial design whose blocks have intersections of only two sizes \cite{N82, SSb}.  
The block set of a quasi-symmetric design is identified with a subset of the Johnson scheme. Namely we add the characteristic vectors of the blocks to $R_d$.  
The block graph of a quasi-symmetric design is a strongly regular graph. The characteristic vectors of the blocks are the Euclidean representation of the corresponding strongly regular graph. 
The LRS ratio $s$ of the representation is equal to the absolute value of the smallest eigenvalue of the strongly regular graph \cite{BB05}. 
For $s=3$, a quasi-symmetric design whose block graph has smallest eigenvalue $-3$ is investigated in \cite{NPS17}.   
After modifying the representation of a quasi-symmetric design with eigenvalue $-s$,  
we can add the set to $R_d$.  
We check the maximality of the new 2-distance set 
by trying to add other vectors in $T_d(k,\beta)$ or $T_d(k',\beta)$, where $k'$ is defined in Lemma~\ref{lem:m}.

\subsubsection{Strongly resolvable design}
A {\it strongly resolvable design} is a quasi-symmetric design
whose block graph is the complete multipartite regular graph. 
For prime power $s$, a strongly resolvable 2-$(s^3,s^2,s+1)$ design 
is obtained from the affine space $AG(3,s)$.  
The point set of $AG(3,s)$ is $\mathbb{F}_s^3$, where 
$\mathbb{F}_s$ is the finite field of order $s$. The block set $\mathfrak{B}$ of $AG(3,s)$ is the set of all $2$-dimensional affine subspaces of $\mathbb{F}_s^3$, and its size is $s^3+s^2+s$. 
Let $\mathbf{v}_B$ be the characteristic vector of $B \in \mathfrak{B}$. The length of $\mathbf{v}_B$ is $s^3$. 
The distance $\mathfrak{l}(\mathbf{v}_{B_1}, \mathbf{v}_{B_2})$ is $s^2$ or $s(s-1)$ for  any $B_1,B_2 \in B$ with $B_1 \ne B_2$, 
and the LRS ratio is $s$. 

By Theorem~\ref{lem:key}, for $\alpha <2$ and $k=s^2+s-1$, 
we have $\beta=-1/s$ and $d+1=(s-1)(s+1)^2$. 
By Lemma~\ref{lem:l}, if distinct vectors $\mathbf{x}, \mathbf{y} \in J(d+1,k)$ 
can be added to $R_d$, then $\mathfrak{l}(\mathbf{x},\mathbf{y}) \in \{s^2,s(s-1)\}$, where $J(d+1,k)$ is identified with $T_d(k,\beta)$. For the block set $\mathfrak{B}$ of $AG(3,s)$, we define
\begin{equation} \label{eq:B'}
\mathfrak{B}'=\{(\underbrace{1,\ldots, 1}_{s-1},\underbrace{0,\ldots,0}_{s^2-2s}, \mathbf{v}_B)
\colon\, B \in \mathfrak{B}\} \subset J(d+1,k).
\end{equation}
By Lemma~\ref{lem:l}, $R_d\cup \mathfrak{B}'$ is a $2$-distance set. 
For $d+1=(s-1)(s+1)^2$, we have another choice of $k$, namely $k'=s^3$ by Lemma~\ref{lem:two_k}. 
The vector $\mathbf{x}_0 \in T_d(k',\beta)$ is defined to be the vector  corresponding to 
\begin{equation} \label{eq:3.6}
(\underbrace{0,\ldots,0}_{s^2-s-1}, \underbrace{1,\ldots,1}_{s^3}) \in J(d+1,k'). 
\end{equation}
By Lemma~\ref{lem:m}, the set $R_d\cup \mathfrak{B}'\cup \{\mathbf{x}_0\}$ is a $2$-distance set with 
$2s^2(s+1)$ points, and $|R_d\cup \mathfrak{B}'\cup\{\mathbf{x}_0\}|\geq 2d+2$. 
 
We use the following lemma in order to show the maximality of $R_d\cup \mathfrak{B}' \cup\{\mathbf{x}_0\}$. 
\begin{lemma}\label{lem:resol}
Let $s$ be an integer at least $2$. 
Let $a$, $b$ be non-negative integers such that $a-b=s$ and $b<s$.  
Let $(P,\mathfrak{B})$ be a strongly resolvable 2-$(s^3,s^2,s+1)$ design, which is a quasi-symmetric design such that  $|B_1\cap B_2|=\{0, s\}$ for any $B_1,B_2 \in \mathfrak{B}$ with $B_1 \ne B_2$.  
Then there does not exist a non-empty subset $S$ of $P$ such that 
$|S \cap B| \in \{a, b\}$ for each $B \in \mathfrak{B}$.      
\end{lemma}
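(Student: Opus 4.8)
The plan is to exploit the parallel-class structure of a strongly resolvable design together with the standard first- and second-moment counts of a point set against the blocks, and to finish with a divisibility obstruction. First I would record the parameters. Since two blocks of a common parallel class are disjoint and there are $s$ of them, each of size $s^2$, filling all $s^3$ points, the design has replication number $r=s^2+s+1$, exactly $s^2+s+1$ parallel classes, and index $\lambda=s+1$. Writing $a=b+s$, every block meets $S$ in $b$ or $b+s$ points; summing over the $s$ blocks of one parallel class (which partition $P$) gives $|S|=s(t+b)$, where $t$ is the number of blocks of that class meeting $S$ in $a$ points. In particular $s\mid|S|$, and comparing two classes shows $t=|S|/s-b$ is the same for every parallel class.

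Next I would turn the two global identities $\sum_B|S\cap B|=r|S|$ and $\sum_B\binom{|S\cap B|}{2}=\lambda\binom{|S|}{2}$ into one clean relation. Setting $m:=t+b=|S|/s$ and using the per-class count $t$, both sums become explicit, and after substituting $a=b+s$ and $r=s^2+s+1$ they collapse to
\[
m(s^2-m)=(s^2+s+1)\,t(s-t),
\]
which is the heart of the argument. The degenerate values are immediate: $t=0$ forces every block to meet $S$ in exactly $b$ points, and the second moment then yields $b=s^2$, impossible since $b<s$; while $t=s$ is excluded because the left-hand side would vanish although $m\ge s$.

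For the remaining range $1\le t\le s-1$ I would show the displayed equation has no admissible solution. Because $m\le 2s-1<s^2/2$, the function $\varphi(m)=m(s^2-m)$ is increasing on the feasible interval $t\le m\le t+s-1$, so its maximum $\varphi(t+s-1)$ already falls short of $(s^2+s+1)t(s-t)$ for the small values of $t$ (for instance at $t=1$ one has $\varphi(s)=s^3-s^2<s^3-1$), ruling those out by size alone; for $t$ near $s-1$, where the real root does lie in range, I would invoke integrality, reducing the equation modulo $s^2+s+1$ to $m(m+s+1)\equiv 0$ and checking the finitely many $m\le 2s-1$ that survive, equivalently that the discriminant $s^4-4(s^2+s+1)t(s-t)$ is never a perfect square yielding an in-range $m$. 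This last integrality step is the main obstacle, as it is not a single inequality. A conceptually cleaner shortcut, available once one uses the affine-geometry realization $P=\mathbb{F}_s^3$ with the blocks taken to be the planes, is to count $\sum|S\cap B|$ over the $s+1$ planes through a fixed line two ways: this forces $b\equiv|S|\equiv 0\pmod s$, hence $b=0$, after which choosing $p\in S$ and double-counting the $S$-pairs through $p$ gives $(|S|-1)(s+1)=s^3-1$, which is impossible because $s+1\nmid s^3-1$ for $s\ge 2$.
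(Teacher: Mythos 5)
Your first route sets up the right quantities, and your key identity is correct: with $m=|S|/s=t+b$, the parallel classes give $|S|=s(t+b)$ with $t$ independent of the class, and the first/second moment identities do collapse to $m(s^2-m)=(s^2+s+1)\,t(s-t)$; the cases $t=0$ and $t=s$ are disposed of correctly. But for $1\le t\le s-1$ there is a genuine gap, and you name it yourself. The size comparison really does fail when $t$ is near $s-1$: for $s=5$, $t=4$ one has $\varphi(t+s-1)=\varphi(8)=8\cdot 17=136>124=31\cdot 4\cdot 1$, so the Diophantine step is not a cleanup but the actual content of the proof, and you do not carry it out. Reducing modulo $s^2+s+1$ to $m(m+s+1)\equiv 0$ leaves the possibilities $m(m+s+1)=j(s^2+s+1)$ with $1\le j\le 5$, each of which is a Pell-type condition on $s$ (e.g.\ $j=1$ asks when $5s^2+6s+5$ is a square); ``checking the finitely many $m\le 2s-1$ that survive'' is finite for each fixed $s$ but is not a finite check uniformly in $s$, and nothing in your sketch rules these families out. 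So route one stalls exactly where the paper does its work.

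Your shortcut is internally correct: the pencil of $s+1$ planes through a line $L$ gives $\sum_{B\supseteq L}|S\cap B|=s|S\cap L|+|S|\equiv (s+1)b\pmod s$, forcing $b\equiv|S|\equiv 0\pmod s$, hence $b=0$, $a=s$; then the pair count through $p\in S$ gives $(|S|-1)(s+1)=(s^2+s+1)(s-1)=s^3-1$, impossible since $s^3-1\equiv -2\pmod{s+1}$. However, this proves a weaker statement than the lemma. The lemma is asserted for an \emph{arbitrary} strongly resolvable $2$-$(s^3,s^2,s+1)$ design, and such designs other than $AG(3,s)$ exist (for $s=3$ there are many non-isomorphic affine resolvable $2$-$(27,9,4)$ designs), whereas your derivation of $b=0$ has no meaning without the line/plane geometry of $\mathbb{F}_s^3$. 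The paper's own proof is geometry-free: it fixes $x_0\in S$, counts incidences of $S$ against the $s(s+1)$ blocks through $x_0$ other than its resolution block, reads off $a\equiv 0\pmod{s}$ (hence $b=0$, $a=s$, using $b<s$) and $u\equiv a-1\pmod{s+1}$ from that single equation, and then gets a contradiction from $u=s(s+1)$; it uses only $r=s^2+s+1$, $\lambda=s+1$, one parallel class, and the intersection numbers $\{0,s\}$, so it covers the lemma's full scope. Your shortcut's endgame is essentially the paper's contradiction in cleaner form, and it would suffice for the paper's application in Theorem 3.9 (which only uses $AG(3,s)$), but as a proof of the lemma as stated the missing piece is a geometry-free argument that $b=0$.
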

\begin{proof}
Assume $S$ is a subset of $P$ such that 
$|S \cap B| \in \{a, b\}$ for each $B \in \mathfrak{B}$.    
Since the 2-$(s^3,s^2,s+1)$ design is strongly resolvable,  
there exist $s$ blocks $B_1, \ldots, B_s$ such that 
$P=\bigsqcup_{i=1}^s B_i$, that is disjoint union. 
Let $S_i= S \cap B_i$.  Suppose 
$|S_i|=a$ for $1\leq i \leq t$, and 
 $|S_i|=b$ for $t+1\leq i \leq s$. 
For fixed $x_0 \in S_1$, there exist $s(s+1)$ blocks that contain $x_0$ except for $B_1$. Let $\mathfrak{C}=\{Z_1,\ldots, Z_m \}$ be the set of the $s(s+1)$ blocks, where $m=s(s+1)$. 
Suppose $|S\cap Z_i|=a$ for $1\leq i \leq u$, and 
 $|S\cap Z_i|=b$ for $u+1\leq i \leq m$. 
For  each $y \in S_1\setminus \{x_0\}$, 
there exist $s$ blocks in $\mathfrak{C}$ that contain $y$. 
For each $z \in S_i$ with $i \ne 1$, there exist
$s+1$ blocks in $\mathfrak{C}$ that contain $z$. 
Counting the points in $S$ including duplication, 
\[
\sum_{i=1}^m |S \cap Z_i|= \sum_{i=1}^s \sum_{x \in S_i}
 |\{j\colon\, x \in Z_j \}|. 
\] 
From this equality, for $t \ne 0$ 
\begin{equation} \label{eq:au1}
au+b(m-u)=s(s+1)+(a-1)s+a(s+1)(t-1)+b(s+1)(s-t), 
\end{equation}
and for $t =0$
\begin{equation}\label{eq:au2}
au+b(m-u)=s(s+1)+(b-1)s+b(s+1)(s-1). 
\end{equation}

Equation \eqref{eq:au1} implies that 
$
a \equiv 0 \pmod{s} 
$
and
$
a \equiv u+1 \pmod{s+1}  
$.
From $a-b=s$ and $
a \equiv 0 \pmod{s} 
$, we have $b \equiv 0 \pmod{s}$. 
From $b<s$ and $b \equiv 0 \pmod{s}$,  we have $a=s$ and $b=0$.      
For each $i$, it follows $|S \cap Z_i| \geq 1$ since $x_0$ is in $S \cap Z_i$.  The number of $i$ such that 
$|S \cap Z_i|=b$ is $m-u$. Thus $b=0$ implies $u=m$, which contradicts $0\equiv s(s+1)=m= u \equiv a-1 \equiv s-1 \pmod{ s+1}$. 

Equation \eqref{eq:au2} implies that 
$
b \equiv 0 \pmod{s} 
$
and
$
b \equiv u+1 \pmod{s+1}  
$.
From $b<s$ and $b \equiv 0 \pmod{s}$,  we have $a=s$ and $b=0$. 
 This implies $u=m$, which contradicts $0\equiv s(s+1)=m= u \equiv b-1 \equiv -1 \pmod{ s+1}$. 

The lemma therefore follows.  
\end{proof}

\begin{theorem}
Let $s$ be a prime power, $d=(s-1)(s+1)^2-1$, $ \mathbf{x}_0$ defined in \eqref{eq:3.6}, and $\mathfrak{B}'$ defined in \eqref{eq:B'}. 
Then the $2$-distance set $R_d\cup \mathfrak{B}' \cup \{\mathbf{x}_0\}$ is maximal. 
\end{theorem}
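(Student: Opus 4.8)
The plan is to prove maximality by assuming some $\mathbf{z}\in\mathbb{R}^d$ can be adjoined to $R_d\cup\mathfrak{B}'\cup\{\mathbf{x}_0\}$ while keeping only two distances, and deriving a contradiction. Since $R_d\cup\mathfrak{B}'\cup\{\mathbf{x}_0\}$ already realizes exactly the distances $\sqrt2$ and $\sqrt\alpha$ with $\alpha<2$ and LRS ratio $s$, the set $R_d\cup\{\mathbf{z}\}$ is a $2$-distance set realizing these same two distances. By Theorem~\ref{lem:key}(1) this forces $\beta=-1/s$, and equation \eqref{eq:thm3.1_1}, which is quadratic in $k$ for our fixed $d=(s-1)(s+1)^2-1$, has by Lemma~\ref{lem:two_k}(1) exactly the two roots $k=s^2+s-1$ and $k'=s^3$. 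Hence $\mathbf{z}\in T_d(s^2+s-1,-1/s)=J(d+1,s^2+s-1)$ or $\mathbf{z}\in T_d(s^3,-1/s)=J(d+1,s^3)$. Throughout I split the $d+1=(s-1)(s+1)^2$ coordinates into a prefix of length $s-1$, a middle block of length $s^2-2s$, and a design part of length $s^3$ indexed by the point set $P$ of $AG(3,s)$; write $p$ for the number of ones of $\mathbf{z}$ in the prefix and $S\subseteq P$ for the support of $\mathbf{z}$ in the design part. For a block $B$, let $\mathbf{u}_B\in\mathfrak{B}'$ denote the vector associated with $B$ via \eqref{eq:B'}.

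First I would dispose of the case $\mathbf{z}\in J(d+1,s^2+s-1)$. Since $\mathbf{u}_B$ has all its prefix entries and no middle entries equal to one, the number of common ones of $\mathbf{z}$ and $\mathbf{u}_B$ is $p+|S\cap B|$, so $\mathfrak{l}(\mathbf{z},\mathbf{u}_B)=(s^2+s-1)-p-|S\cap B|$. As $\mathbf{z}\notin\mathfrak{B}'$, Lemma~\ref{lem:l}(1) requires $\mathfrak{l}(\mathbf{z},\mathbf{u}_B)\in\{s^2,s(s-1)\}$ for every $B\in\mathfrak{B}$, equivalently $|S\cap B|\in\{s-1-p,\,2s-1-p\}$. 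These two values differ by $s$, and the smaller one, $b=s-1-p$, satisfies $0\le b<s$ because $0\le p\le s-1$; thus Lemma~\ref{lem:resol} applies and forces $S=\emptyset$. But then all $s^2+s-1$ ones of $\mathbf{z}$ lie in the prefix and middle, which together have only $s^2-s-1<s^2+s-1$ coordinates, a contradiction.

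Next I would treat $\mathbf{z}\in J(d+1,s^3)$, where the relevant statistic is $\mathfrak{m}$. By Lemma~\ref{lem:m}(1) the number of common ones of $\mathbf{u}_B$ and $\mathbf{z}$, namely $p+|S\cap B|$, must lie in $\{s^2,s(s-1)\}$ for every $B$, so $|S\cap B|\in\{s^2-p,\,s^2-s-p\}$. These intersection numbers can be large, so instead of applying Lemma~\ref{lem:resol} to $S$ I would pass to the complementary design support $\bar S=P\setminus S$. Since $|B|=s^2$, the condition becomes $|\bar S\cap B|\in\{p,\,p+s\}$ for every $B$; again the two values differ by $s$, and the smaller value $p$ lies in $\{0,\dots,s-1\}$, so Lemma~\ref{lem:resol} applies and yields $\bar S=\emptyset$. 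Hence $\mathbf{z}$ is all ones on the design part, and since $k'=s^3$ exhausts its ones this means $\mathbf{z}=\mathbf{x}_0$, contradicting the assumption $\mathbf{z}\ne\mathbf{x}_0$.

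The combinatorial engine is Lemma~\ref{lem:resol}, which is already in hand, so the remaining work is just to reduce each case to its hypotheses $a-b=s$ and $0\le b<s$. I expect the only genuinely delicate point to be the second case: the raw intersection sizes $|S\cap B|$ fall outside the admissible range of Lemma~\ref{lem:resol}, and one must recognize that complementing within the design part restores exactly the parameters the lemma needs. It is worth noting that neither case uses the distances from $\mathbf{z}$ to $\mathbf{x}_0$ nor any cross-conditions among added points: the constraints coming from the block vectors in $\mathfrak{B}'$ alone, fed into Lemma~\ref{lem:resol}, already close both cases.
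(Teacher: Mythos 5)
Your proof is correct, and on the case $\mathbf{z}\in T_d(s^2+s-1,-1/s)$ it is essentially the paper's argument: translate the condition of Lemma~\ref{lem:l} into block-intersection sizes $\{s-1-p,\,2s-1-p\}$ for the design-part support and invoke Lemma~\ref{lem:resol}. (You even fill a small gap there: Lemma~\ref{lem:resol} only excludes non-empty supports, and you explicitly kill the empty case by counting the $s^2-s-1$ coordinates outside the design part, a step the paper glosses over.) Where you genuinely diverge is the case $\mathbf{z}\in T_d(s^3,-1/s)$. The paper disposes of it with a two-line packing observation that uses the presence of $\mathbf{x}_0$: any two distinct vectors of $T_d(s^3,-1/s)$ can differ only on the $s^2-s-1$ positions where one of them has a zero, so $\mathfrak{l}(\mathbf{x}_0,\mathbf{z})\le s^2-s-1<s(s-1)$, which violates Lemma~\ref{lem:l}; hence nothing in $T_d(s^3,-1/s)$ can be added once $\mathbf{x}_0$ is present. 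You instead never use $\mathbf{x}_0$: you apply Lemma~\ref{lem:m} against the block vectors, and the key move is complementing the design-part support so that the intersection numbers become $\{p,\,p+s\}$ with $p\le s-1$, exactly the hypotheses of Lemma~\ref{lem:resol}; this forces $\mathbf{z}=\mathbf{x}_0$. Your route is longer but proves slightly more, namely that $\mathbf{x}_0$ is the \emph{unique} vector of $T_d(s^3,-1/s)$ addable to $R_d\cup\mathfrak{B}'$ at all — which also explains why $\mathbf{x}_0$ had to be adjoined in the first place — whereas the paper's argument buys brevity but only shows maximality relative to the already-completed set.
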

\begin{proof}
Suppose $\mathbf{x} \in T_d(k,\beta)$ can be added to $R_d\cup \mathfrak{B}' \cup \{\mathbf{x}_0\}$ while maintaining the $2$-distance condition, where $\beta=-1/s$. By Lemma~\ref{lem:two_k}, it follows that $\mathbf{x}$ is in $T_d(k,\beta)$ or $T_d(k',\beta)$ for $k=s^2+s-1$ and $k'=s^3$. 
Assume $\mathbf{x} \in T_d(k',\beta)$. 
Since the number of entries $0$ of $\mathbf{x}_0$ in \eqref{eq:3.6} is $s^2-s-1$, we have $\mathfrak{l}(\mathbf{x}_0,\mathbf{x}) \leq s^2-s-1$. Thus $\mathfrak{l}(\mathbf{x}_0,\mathbf{x}) \not\in \{s^2,s(s-1)\}$ and  hence
$R_d \cup \{\mathbf{x}_0, \mathbf{x}\}$ is not 2-distance by Lemma~\ref{lem:l}.  
Thus $\mathbf{x} \in T_d(k,\beta)$. 
 Now $\mathbf{x}$ is identified with a point of $J(d+1,k)$, and so is a point $\mathbf{y}$ of $\mathfrak{B}'$. Then it follows that $|\mathbf{x} \cap \mathbf{y}| \in \{s-1,2s-1\}$ for each $\mathbf{y} \in \mathfrak{B}'$ 
because $\mathfrak{l}(\mathbf{x}, \mathbf{y}) \in \{s^2,s(s-1)\}$ by Lemma~\ref{lem:l}. 
Let $P$ be the point set of $AG(3,s)$, and $\mathfrak{B}$ the block set of $AG(3,s)$. Let $X=\mathbf{x} \cap P$.  
For some integers $a,b$ with $a-b=s$ and $b<s$, it follows that $|X \cap B|\in \{a,b\}$ for each $B \in \mathfrak{B}$. 
By Lemma~\ref{lem:resol}, there does not exist such $X$.
This implies the assertion. 
\end{proof}
\label{sec:3.3.1}

\subsubsection{$4$-$(23,7,1)$ Witt design or its complement}
Let $\alpha=3$, namely $s=3$. 
The set $T_d(k,\beta)$ is identified with $J(d+1,k)$. 
Let  
$
\mathfrak{m}(\mathbf{x},\mathbf{y})=|\{i \mid x_i=y_i=1\}|
$ for $\mathbf{x}=(x_1,\ldots, x_{d+1}), \mathbf{y}=(y_1,\ldots, y_{d+1}) \in J(d+1,k)$.  
If $\mathbf{x},\mathbf{y} \in J(d+1,k)$ can be added to $R_d$, 
then $\mathfrak{m}(\mathbf{x},\mathbf{y}) \in \{ k-6,k-4\}$ by Lemma~\ref{lem:l}.  
By Theorem~\ref{lem:key} (2), 
the dimensions are only $(d,k)=(23,10),
(24,8),(24,13),(26,7),(26,16),(31,6),(31,22),(48,5),(48,40)$. 
By Lemma~\ref{lem:m} (1), if $\mathbf{x} \in J(d+1,k)$ and
$\mathbf{y} \in J(d+1,k')$ can be added to $R_d$, then 
$\mathfrak{m}(\mathbf{x},\mathbf{y}) \in \{6,9\}$.

Let $\mathfrak{B}$ be the block set of the $4$-$(23,7,1)$ Witt design \cite{GS70}. The size of $\mathfrak{B}$ is 253.  Let $\bar{\mathfrak{B}}$ be the block set of the complement of the $4$-$(23,7,1)$ design. 
By exhaustive computer search, we can check 
$|\{  \mathfrak{m}(\mathbf{x},\mathbf{y}) \colon\, \mathbf{x} \in \mathfrak{B}\}| \geq 3$ for each $\mathbf{y} \in J(23,k)$ with 
$2 \leq k \leq 21$. 
Note that if $|\{  \mathfrak{m}(\mathbf{x},\mathbf{y}) \colon\, \mathbf{x} \in \mathfrak{B}\}| \geq 3$ holds, then
$|\{  \mathfrak{m}(\mathbf{x},\mathbf{y}) \colon\, \mathbf{x} \in \bar{\mathfrak{B}}\}| \geq 3$ also holds. 
They are used in the following (i)--(v). 


\noindent
(i) $d=24$, $k=8$, $\alpha=3$, $k'=13$

We define
\[
\mathfrak{B}'=\{(1,0, \mathbf{v}_B)
\colon\, B \in \mathfrak{B}\} \subset J(25,8). 
\]  
Let $\mathbf{z}'=(*, *, \mathbf{z}) \in J(25,8)$, where 
$\mathbf{z} \in J(23,8-i)$ for some $i\in \{0,1,2\}$. 
If 
$|\{ \mathfrak{m}(\mathbf{z}',\mathbf{x}') \colon\, \mathbf{x}' \in \mathfrak{B}'\}|\leq 2 $ holds, then  
$|\{\mathfrak{m}(\mathbf{z},\mathbf{v}_B) \colon\, B \in \mathfrak{B}\}| \leq 2$, but it is impossible.  
For $k'=13$, let $\mathbf{z}'=(*, *, \mathbf{z}) \in J(25,13)$, 
where $\mathbf{z} \in J(23,13-i)$ for some $i\in \{0,1,2\}$. 
If 
$|\{ \mathfrak{m}(\mathbf{z}',\mathbf{x}') \colon\, \mathbf{x}' \in \mathfrak{B}'\}|\leq 2 $ holds, then  
$|\{\mathfrak{m}(\mathbf{z},\mathbf{v}_B) \colon\, B \in \mathfrak{B}\}| \leq 2$, but it is impossible.  
These imply that $X=R_{24} \cup \mathfrak{B}'$ is maximal as a 2-distance set in $\mathbb{R}^{24}$. The set $X$ has 278 points.  

\noindent 
(ii) $d=26$, $k=7$, $\alpha=3$, $k'=16$

We define
\[
\mathfrak{B}'=\{(0,0,0,0, \mathbf{v}_B)
\colon\, B \in \mathfrak{B}\} \subset J(27,7).
\]
Let $\mathbf{z}'=(*, *, *,*,\mathbf{z}) \in J(27,7)$, where 
$\mathbf{z} \in J(23,7-i)$ for some $i\in \{0,1,\ldots ,4\}$. 
If 
$|\{ \mathfrak{m}(\mathbf{z}',\mathbf{x}') \colon\, \mathbf{x}' \in \mathfrak{B}'\}|\leq 2 $ holds, then  
$|\{\mathfrak{m}(\mathbf{z},\mathbf{v}_B) \colon\, B \in \mathfrak{B}\}| \leq 2$, but it is impossible.  
For $k'=16$, let $\mathbf{z}'=(*,*,*, *, \mathbf{z}) \in J(27,16)$, 
where $\mathbf{z} \in J(23,16-i)$ for some $i\in \{0,1,\ldots ,4\}$. 
If 
$|\{ \mathfrak{m}(\mathbf{z}',\mathbf{x}') \colon\, \mathbf{x}' \in \mathfrak{B}'\}|\leq 2 $ holds, then  
$|\{\mathfrak{m}(\mathbf{z},\mathbf{v}_B) \colon\, B \in \mathfrak{B}\}| \leq 2$, but it is impossible.  
These imply that $X=R_{26} \cup \mathfrak{B}'$ is maximal as a 2-distance set in $\mathbb{R}^{24}$. The set $X$ has 280 points.  

\noindent
(iii) $d=26$, $k=16$, $\alpha=3$, $k'=7$

We define
\[
\bar{B}'=\{(0,0,0,0, \mathbf{v}_B)
\colon\, B \in \bar{\mathfrak{B}}\} \subset J(27,16).
\]
Let $\mathbf{z}'=(*, *,*,*, \mathbf{z}) \in J(27,16)$, where 
$\mathbf{z} \in J(23,16-i)$ for some $i\in \{0,1,\ldots ,4\}$. 
If 
$|\{ \mathfrak{m}(\mathbf{z}',\mathbf{x}') \colon\, \mathbf{x}' \in \bar{\mathfrak{B}}'\}|\leq 2 $ holds, then  
$|\{\mathfrak{m}(\mathbf{z},\mathbf{v}_B) \colon\, B \in \bar{\mathfrak{B}}\}| \leq 2$, but it is impossible.  
For $k'=7$, let $\mathbf{z}'=(*, *,*,*, \mathbf{z}) \in J(27,7)$, 
where $\mathbf{z} \in J(23,7-i)$ for some $i\in \{0,1,\ldots,4\}$. 
If 
$|\{ \mathfrak{m}(\mathbf{z}',\mathbf{x}') \colon\, \mathbf{x}' \in \bar{\mathfrak{B}}'\}|\leq 2 $ holds, then  
$|\{\mathfrak{m}(\mathbf{z},\mathbf{v}_B) \colon\, B \in \bar{\mathfrak{B}}\}| \leq 2$, but it is impossible.  
These imply that $X=R_{26} \cup \bar{\mathfrak{B}}'$ is maximal as a 2-distance set in $\mathbb{R}^{26}$. The set $X$ has 280 points. 
The $2$-distance set $X$ is isometric to the set in (ii).  

\noindent
(iv) $d=31$, $k=22$, $\alpha=3$, $k'=6$

We 
\[
\bar{\mathfrak{B}}'=\{(1,1,1,1,1,1,0,0,0, \mathbf{v}_B)
\colon\, B \in \bar{\mathfrak{B}}\} \subset J(32,22).
\]
Let $\mathbf{z}'=(\underbrace{*, \ldots *}_{9}, \mathbf{z}) \in J(32,22)$, where 
$\mathbf{z} \in J(23,22-i)$ for some $i\in \{0,1,\ldots ,9\}$. 
If 
$|\{ \mathfrak{m}(\mathbf{z}',\mathbf{x}') \colon\, \mathbf{x}' \in \bar{\mathfrak{B}}'\}|\leq 2 $ holds, then  
$|\{\mathfrak{m}(\mathbf{z},\mathbf{v}_B) \colon\, B \in \bar{\mathfrak{B}}\}| \leq 2$, but it is impossible.  

For $k'=6$, let $\mathbf{z}'=(\underbrace{*, \ldots *}_{9}, \mathbf{z}) \in J(32,6)$, 
where $\mathbf{z} \in J(23,6-i)$ for some $i\in \{0,1,\ldots ,6\}$. 
For $\mathbf{z} \in J(23,6-i)$ with $i\in \{0,1,\ldots,4\}$,  if 
$|\{ \mathfrak{m}(\mathbf{z}',\mathbf{x}') \colon\, \mathbf{x}' \in \bar{\mathfrak{B}}'\}|\leq 2 $ holds, then  
$|\{\mathfrak{m}(\mathbf{z},\mathbf{v}_B) \colon\, B \in \bar{\mathfrak{B}}\}| \leq 2$, but it is impossible.  
It should hold that $\mathfrak{m}(\mathbf{z}',\mathbf{x}') \in \{6,9\}$ for each $\mathbf{x}' \in \bar{\mathfrak{B}}'$, and $\mathfrak{m}(\mathbf{z},\mathbf{x}) \in \{6-j,9-j\}$ for each $\mathbf{x} \in \bar{\mathfrak{B}}$ for some $j \in \{0,1,\ldots 6\}$. 
 For $\mathbf{z} \in J(23,1)$, $\mathfrak{m}(\mathbf{z},\mathbf{x}) \not\in \{6-j,9-j\}$ for each $\mathbf{x} \in \bar{\mathfrak{B}}$ for any $j \in \{0,1,\ldots 6\}$. 
The vector 
\[
\mathbf{x}_0=(1,1,1,1,1,1,\underbrace{0,\ldots, 0}_{26}), 
\]
can be added to $R_{31} \cup \bar{\mathfrak{B}}'$
while maintaining the 2-distance condition.

These imply that $X=R_{31} \cup \bar{\mathfrak{B}}' \cup \{\mathbf{x}_0\}$ is maximal as a 2-distance set in $\mathbb{R}^{31}$. The set $X$ has 286 points.  

\noindent
(v) $d=48$, $k=40$, $\alpha=3$, $k'=5$

We define
\[
\bar{\mathfrak{B}}'=\{(\underbrace{1,\ldots,1}_{24},0,0, \mathbf{v}_B)
\colon\, B \in \bar{\mathfrak{B}}\} \subset J(49,40).
\]

Let $\mathbf{z}'=(\underbrace{*, \ldots, *}_{26}, \mathbf{z}) \in J(49,40)$, where 
$\mathbf{z} \in J(23,23-i)$ for some $i\in \{0,1,\ldots ,9\}$. 
For $\mathbf{z} \in J(23,23-i)$ with $i\in \{2, \ldots ,9\}$ if 
$|\{ \mathfrak{m}(\mathbf{z}',\mathbf{x}') \colon\, \mathbf{x}' \in \bar{\mathfrak{B}}'\}|\leq 2 $ holds, then  
$|\{\mathfrak{m}(\mathbf{z},\mathbf{v}_B) \colon\, B \in \bar{\mathfrak{B}}\}| \leq 2$, but it is impossible.  
For any 
$\mathbf{x},\mathbf{y} \in \bar{\mathfrak{B}}'$, we have $\mathfrak{m}(\mathbf{x},\mathbf{y}) \in \{ 34, 36\}$. 
If $\mathbf{z}'$ can be added to $R_{48} \cup \bar{\mathfrak{B}}'$
while maintaining the 2-distance condition, then $\mathfrak{m}(\mathbf{z}',\mathbf{x}') \in \{34,36\}$ for each $\mathbf{x}' \in \bar{\mathfrak{B}}'$. It is impossible for $\mathbf{z} \in J(23,23)$ or $J(23,22)$. 

For $k'=5$ and $\mathbf{z}' \in J(49,5)$, it should hold that  $\mathfrak{m}(\mathbf{z}',\mathbf{x}') \in \{6,9\}$ for each $\mathbf{x}' \in \bar{\mathfrak{B}}'$, but it is impossible. 

These imply that $X=R_{48} \cup \bar{\mathfrak{B}}'$ is maximal as a 2-distance set in $\mathbb{R}^{48}$. The set $X$ has 302 points.

\subsubsection{$3$-$(22,6,1)$ Witt design, $d=31$, $k=6$, $\alpha=3$, $k'=22$
}

Let $\mathfrak{B}$ be the block set of the complement of the $3$-$(22,6,1)$ Witt design \cite{GS70}. The size of $\mathfrak{B}$ is 77. We define
\[
\mathfrak{\mathfrak{B}}'=\{(\underbrace{0,\ldots,0}_{10}, \mathbf{v}_B)
\colon\, B \in \mathfrak{B}\} \subset J(32,6).
\]
Note that $\mathfrak{m}(\mathbf{x},\mathbf{y}) \in \{ 0, 2\}$ for any 
$\mathbf{x},\mathbf{y} \in \mathfrak{B}'$. 
By exhausted computer search, 
we can show that for each $i\in \{0,1, \ldots, 5\}$, there does not exist $\mathbf{x} \in J(22,6-i)$  such that $\mathfrak{m}(\mathbf{x},\mathbf{y}) \in \{0,2\}$ for each $\mathbf{y} \in \mathfrak{B}$.  
We can also show that for each $i\in \{1,\ldots, 10\}$, there does not exist $\mathbf{x} \in J(22,22-i)$ such that $\mathfrak{m}(\mathbf{x},\mathbf{y}) \in \{6,9\}$ for each $\mathbf{y} \in \mathfrak{B}$.  The vector 
\[
\mathbf{x}_0=(\underbrace{0,\ldots, 0}_{10},\underbrace{1,\ldots, 1}_{22})
\]
can be added to $R_{31} \cup \mathfrak{B}'$ while maintaining the 2-distance condition. 
These imply that $X=R_{31} \cup \mathfrak{B}' \cup \{\mathbf{x}_0\}$ is maximal as a 2-distance set in $\mathbb{R}^{31}$. The set $X$ has 110 points.  

\subsubsection{$2$-$(21,7,12)$ design, $d=23$, $k=10$, $\alpha=3$, $k'=10$}


Let $\mathfrak{B}$ be the block set of the $2$-$(21,7,12)$ design \cite{GS70,T86}. The size of $\mathfrak{B}$ is 120. 
 We define
\[
\mathfrak{B}'=\{(1,1,1, \mathbf{v}_B)
\colon\, B \in \mathfrak{B}\} \subset J(24,10).
\]
Note that $\mathfrak{m}(\mathbf{x},\mathbf{y}) \in \{ 4, 6\}$ for any 
$\mathbf{x},\mathbf{y} \in \mathfrak{B}'$.  
By exhausted computer search, 
we can show that for each $i\in \{0,1,2,3\}$ and each $j \in \{0,1,2,3\}$, there does not exist $\mathbf{x} \in J(21,10-i)$  such that $\mathfrak{m}(\mathbf{x},\mathbf{y}) \in \{4-j,6-j\}$ for each $\mathbf{y} \in \mathfrak{B}$.  
These imply that $X=R_{23} \cup \mathfrak{B}'$ is maximal as a 2-distance set in $\mathbb{R}^{23}$. The set $X$ has 144 points.  




\section{Concluding remarks}
We are interested in determining the largest possible $m$-distance set in $\mathbb{R}^d$ for given $m$ and $d$. In this paper, 
we assumed that a $2$-distance set $X$ in $\mathbb{R}^d$ contains a $d$-dimensional regular simplex and has size at least $2d+2$, and we attempted to determine the largest $X$. 
For the LRS ratio $2$, the largest 2-distance sets $X$ exist only for $d=7,8$, and we classified the sets. For the LRS ratio $s\geq 3$, the situation is comparatively more complicated. It is difficult to determine the largest subset $Y$ of $T_d(k,\beta) \cup T_d(k',\beta)$ that satisfies the conditions in Lemmas~\ref{lem:l} and \ref{lem:m}. Determining the largest sets $Y$ is equivalent to determining the largest 2-distance sets $X$ that contain a regular simplex. The main reason for the difficulty is that we have no means of systematic construction of non-spherical large $m$-distance sets in general. 
For $s=2$, the possible dimensions are relatively small and we have large $2$-distance sets. This is very helpful in determining the largest 2-distance sets $X$ that contain a regular simplex. 
However, for $s\geq 3$, the possible dimensions are so large that it is almost impossible to determine the largest sets $Y$ so far. 
In this study, we found large sets $Y$ from the representations of  quasi-symmetric designs. 
For $s=3$, fortunately, there are several large 2-distance sets $Y$ obtained from the previous research \cite{NPS17}.  
We can prove the maximality of these sets after adding a suitable point, if needed. 
For any prime power $s$, we constructed a maximal $2$-distance set containing a regular simplex by adding the representation of a strongly resolvable design. 
However the largest 2-distance set $X$ that contains a regular simplex for $s\geq 3$ has still not been determined. 

Based on the aforementioned situation, we would like to suggest further problems as follows: 
\begin{problem}
 Determine the largest possible 2-distance sets $X$ in $\mathbb{R}^d$ that contain a $d$-dimensional regular simplex for a given $d$. 
\end{problem}
\begin{problem}
 Determine the largest possible 2-distance sets $X$ in $\mathbb{R}^d$ that contain a $d$-dimensional regular simplex and  have size at least $2d+2$ for given $d$ and LRS ratio $s\geq 3$. 
This problem is equivalent to finding the largest subset of $T_d(k,\beta) \cup T_d(k',\beta)$ that satisfies the conditions in Lemmas~\ref{lem:l} and \ref{lem:m}.
 \end{problem}
\begin{problem}
Construct maximal $m$-distance sets in $\mathbb{R}^d$ that contain a $d$-dimensional regular simplex. 
In particular, construct the maximal $m$-distance sets whose LRS ratio is an integer. 
\end{problem}

\begin{problem}
Find a strongly regular graph whose Euclidean representation can be added to a regular simplex while maintaining the 2-distance condition. 
\end{problem}

\bigskip

\noindent
\textbf{Acknowledgments.} 
The authors would like to thank anonymous referees for  suggesting the way of the exposition of this paper. 
Nozaki is supported by JSPS KAKENHI Grant Numbers 17K05155,  18K03396, 19K03445, and 20K03527. 
Shinohara is supported by JSPS KAKENHI Grant Number  18K03396.


\begin{thebibliography}{99}
\bibitem{AHNY17}
S. Adachi, R. Hayashi, H. Nozaki, and C. Yamamoto, 
Maximal $m$-distance sets containing the representation of the Hamming graph $H(n , m)$, 
{\it Discrete Math.} {\bf 340} (3) (2017), 430--442.

\bibitem{AK97}
 R. Ahlswede and L.H. Khachatrian, 
 The complete intersection theorem for systems of finite sets, 
{\it Europ.\ J. Combin.} {\bf 18} (1997),  125--136. 

\bibitem{BB05}
 E. Bannai and E. Bannai, 
A note on the spherical embeddings of strongly regular graphs, 
{\it Europ.\ J. Combin.} {\bf 26} (8) (2005), 1177--1179.

\bibitem{BBS}
E. Bannai, E. Bannai, and D. Stanton, 
An upper bound for the cardinality of an $s$-distance subset in real Euclidean space, II, 
\textit{Combinatorica} {\bf 3} (1983), 147--152.

\bibitem{BIb}
E. Bannai and T. Ito, 
{\it Algebraic Combinatorics I: Association Schemes}, 
Benjamin/Cummings, Menlo Park, CA, 1984.

\bibitem{BSS12}
E. Bannai, T. Sato, and J. Shigezumi, 
Maximal $m$-distance sets containing the representation of the Johnson graph $J(n , m)$, 
{\it Discrete Math.} {\bf 312} (22) (2012), 3283--3292.

\bibitem{BM11}
A. Barg and O.R. Musin, 
Bounds on sets with few distances, 
\textit{J. Combin.\ Theory, Ser.\ A} {\bf 118} (2011), 1465--1474.

\bibitem{B84}
A. Blokhuis, 
Few-distance sets, 
CWI Tract, {\bf 7} (1984), 1--70.

\bibitem{DGS77}
P. Delsarte, J.M. Goethals, and J.J. Seidel, 
Spherical codes and designs,
{\it Geom. Dedicata} {\bf 6} (1977), 363--388.

\bibitem{ES66}	
S. J. Einhorn and I. J. Schoenberg,
On Euclidean sets having only two distances between points. I. II, 
{\it Nederl.\ Akad.\ Wetensch.\ Proc.\ Ser.\ A} {\bf 69}={\it Indag.\ Math.} {\bf 28} (1966), 
479--488, 489--504.

\bibitem{EF96}
 P. Erd\H{o}s and P. Fishburn, Maximum planar sets that determine $k$ distances, 
{\it Discrete Math.} {\bf 160} (1996), 115--125.

\bibitem{EKR61}
P. Erd\H{o}s, C. Ko, and R. Rado, 
Intersection theorems for systems of finite sets, 
{\it Quart.\ J. Math.\ Oxford Ser.} (2) {\bf 12} (1961), 313--320. 



\bibitem{GW18}
A. Glazyrin and W.-H. Yu, 
Upper bounds for $s$-distance sets and equiangular lines,
{\it Adv.\ Math.}, {\bf 330} (2018), 810--833. 

\bibitem{GS70}
J.M. Goethals and J.J. Seidel, 
Strongly regular graphs derived from combinatorial designs,  
{\it Canad. J. Math.} {\bf 22} (1970), 597--614. 

\bibitem{LRS77}
D.G. Larman, C.A. Rogers, and J.J. Seidel, 
On $2$-distance sets in Euclidean space, 
\textit{Bull.\ London Math.\ Soc.} 
{\bf 9} (1977), 261--267.

\bibitem{L97}
P. Lison\v{e}k, 
New maximal $2$-distance sets, 
\textit{J. Combin.\ Theory, Ser.\ A} {\bf 77} (1997), 318--338.

\bibitem{MN11}
O.R. Musin and H. Nozaki, 
Bounds on three- and higher-distance sets,
{\it European J. Combin.} {\bf 32} (2011), 1182--1190.

\bibitem{N81}
 A. Neumaier, 
Distance matrices, dimension, and conference graphs, 
{\it Indag.\ Math.} {\bf 84} (4) (1981), 385--391.

\bibitem{N82}
A. Neumaier, 
Regular sets and quasi-symmetric 2-designs, 
in Combinatorial Theory,
D. Jungnickel and K. Vedder, eds., 
Springer, Berlin, 1982, 258--275.

\bibitem{N11}
H. Nozaki, 
A generalization of Larman--Rogers--Seidel's theorem, 
{\it Discrete Math.} {\bf 311} (2011), 792--799.

\bibitem{NPS17}
S.M. Nyayate, R.M. Pawale, and M.S. Shrikhande,
Characterization of quasi-symmetric designs with
eigenvalues of their block graphs, 
{\it Australas.\ J. Combin.} {\bf 68} (1) (2017), 62--70. 

\bibitem{SSb}
M.S. Shrikhande and S.S. Sane, 
{\it Quasi-Symmetric Designs},
Cambridge Univ.
Press, 1991. 


\bibitem{S04}
 M. Shinohara, 
Classification of three-distance sets in two dimensional Euclidean space, 
{\it European J.\ Combin.} {\bf 25} (2004),
1039--1058.

\bibitem{S08}
 M. Shinohara, 
Uniqueness of maximum planar five-distance sets, 
{\it Discrete Math.} {\bf 308} (2008), 3048--3055.

\bibitem{Spre}
M. Shinohara, 
Uniqueness of maximum three-distance sets in the three-dimensional Euclidean space, 
arXiv:1309.2047. 

\bibitem{SO20}
F. Sz\"{o}ll\H{o}si and P.R.J. \"{O}sterg{\aa}rd,
Constructions of maximum few-distance sets in Euclidean spaces, 
{\it Electron.\ J. Combin.} {\bf 27} (1)
(2020), \#P1.23.

\bibitem{T86}
V.D. Tonchev,  
Quasisymmetric designs and self-dual codes,
{\it European J. Combin.} {\bf 7} (1986), 67--73. 

\bibitem{W12}
X. Wei,
A proof of Erd\H{o}s--Fishburn's 
conjecture for $g(6)=13$, 
{\it Electron.\ J. Combin.} {\bf 19} (4)
(2012), \#P38.

\bibitem{W84}
R.M. Wilson, 
The exact bound on the Erd\H{o}s--Ko--Rado theorem, {\it Combinatorica} {\bf 4} (1984), 247--257. 


\end{thebibliography}
\end{document}